 \newtheorem{thm}{Theorem}
\newtheorem{lem}{Lemma}
\newtheorem{cor}{Corollary}
\newtheorem{prop}{Proposition}
\def\E{\mathbf{E}}
\def\N{\mathbb{N}}
\def\R{\mathbb{R}}
\def\S{\mathbb{S}}
\def\C{\mathbb{C}}
\def\D{\mathbb{D}}
\def\Z{\mathbb{Z}}
\def\k{\varkappa}
\def\e{\varepsilon}
\def\re{{\rm e}}
\def\rd{{\rm d}}
\def\rd{{\rm d}}
\def\E{{\mathbf E}}
\def\e{\varepsilon}
\def\mcM{{\mathcal M}}
\begin{document}

\title{A stochastic process defined via the random\\ permutation divisors} 

\author{Eugenijus Manstavi\v{c}ius}

\maketitle

\footnotetext{{\it AMS} 2000 {\it subject classification.} Primary
60F17;      secondary 60C05. \break {\it Key words and
phrases}. Symmetric group; beta distribution; functional limit theorem; Skorokhod space .}

\begin{abstract}
The normalised partial sums of values of a nonnegative multiplicative function over divisors with appropriately restricted sizes of a random permutation from the symmetric group  define trajectories  of a stochastic process.    We prove a functional limit theorem in the Skorokhod space  when the permutations are drawn uniformly at random. Furthermore, we show that the paths of the limit process almost surely belong to the space of continuous functions on the unit interval and, exploiting the results from  number-theoretical papers, we obtain rather complex formulas for the limits of joint power moments of the process.
   \end{abstract}

\section{Introduction and result} 

Stochastic processes appear in various constructions based upon permutations $\sigma$ taken at random from the symmetric group ${\Sigma}_n$. Sometimes their distribution limit as $n\to\infty$ is the Brownian motion (see \cite{DeLP-85} and \cite{GJBEM-San99}); in other cases, it is some other process with independent or dependent increments (see, e.g. \cite{GJBEM-AnISM02} or \cite{GJBEMVZ-07}). The Poisson--Dirichlet process has received greater attention.  For it, we refer to Sections 5.5, 5.7 and 8.2 of the book  \cite{ABT}. In the present paper, we propose a new type process construction based upon the permutation divisors and prove the functional limit theorem in the  Skorokhod space $\D\big([0,1], \mathcal D\big)$ (see Chapter 3 in \cite{Bil}). 
 The very idea of the problem statement goes back to the number-theoretical paper by the author  and N.\,M. Timofeev \cite{EMNMT-97}. The recent result  by G. Bareikis and the author \cite{GBEM-24} on the mean 
of such processes has been promising.

Recall that $\sigma\in\S_n$ is a one-to-one (bijective) mapping $ \sigma:\N_n \to\N_n:=\{1,\dots, n\}$. It can be represented by the table
\[
     \sigma=\binom{1\; 2 \; \dots\; n\,}{i_1\, i_2\, \dots\, i_n},
     \]
     where $\sigma(r)=i_r$ for $1\leq r\leq n$, or by the digraph $G_\sigma$ with vertex set $V(\sigma)=\N_n$. Its components are oriented cycles. A typical cycle has  a vertex set $V(\k)=\{k_1,\dots, k_j\}\subset\N_n$ defined by 
  \[
        k_1\xrightarrow{\sigma} k_2\xrightarrow{\sigma}\cdots\xrightarrow{\sigma}
         k_j\xrightarrow{\sigma} k_1.
        \]
   Using mapping  multiplication,
 one obtains the unique (up to the order of factors) 
  decomposition of $\sigma$  into cycles $\k_i$ on pairwise disjoint subsets $V(\k_i)$, namely,
  \begin{equation}
           \sigma=\k_1\cdots\k_w.                                                                                                                                                                                                                                                                                                                                                                         \label{1}
  \end{equation}
  Here and in what follows
 $w=w(\sigma)$ denotes the number of cycles and $\N_n=V(\k_1)\cup\cdots \cup V(\k_w)$.  Let us also introduce the empty permutation $\emptyset$ and $\S_0=\{\emptyset\}$. 
A subset  $\delta\subset\{\k_1, \dots,\k_w\}$, including the empty one,  
   is a \textit{divisor} of $\sigma$. Being used to the product expression
  (\ref{1}), we  use the notation $\delta\vert \sigma$ rather than $\delta\subset\sigma$.
Let  $k_j(\delta)$ be the number of cycles in $\delta$ of length $j$ if $1\leq j\leq n$. 
The vector 
\[
\bar k(\delta):=\big(k_1(\delta),\dots, k_n(\delta)\big)
\]
 will be called the \textit{cycle vector} of $\delta$. Observe that $0\leq k_j(\delta)\leq k_j(\sigma)$ for each $j\leq n$ if $\delta\vert \sigma$.
 If $\ell(\bar s):=1s_1+\cdots+ ns_n$ for a vector $\bar s=(s_1,\dots, s_n)\in \Z_+^n$, then
$\ell\big(\bar k(\delta)\big)=\#V(\delta)=:|\delta|$ is the \textit{size} of $\delta$.

A construction of the processes we are interested in is based on the  \textit{multiplicative functions} $q:\S_n\to\R$.  For our present purpose, the definition has to be consistently extended  for the divisors of $\sigma\in \S_n$ as well. Thus, we begin with a family of functions  $q_j:\Z_+\to\C$, $j\in\N$, such that
$q_j(0)=1$ for every $j\in \N$,  and put
\begin{equation}
     q(\delta)=\prod_{j\leq n} q_j(k_j(\delta)), \quad \delta\vert\sigma,\; \sigma\in \S_n.
     \label{multq}
     \end{equation}
 We can say that $q_j(s)$, $0\leq s\leq n$, is the function value prescribed to an $s$-subset  of cycles of length $j$ from  a permutation divisor $\delta$. If $q_j(k)=q_j(1)^{k}$ for each $j\leq n$ and $k\geq 0$, the function $q$ will be called \textit{completely multiplicative}.
Note that the multiplicative functions are \textit{structure dependent}, that is, their values $q(\delta)$ depend only on the vector $\bar k(\delta)$ irrespective of the vertex labels. 
In the sequel, the multiplicative functions $ f,  h$, and  $g$ will have expressions as in (\ref{multq}) with
$f_j(\cdot), h_j(\cdot)$, and $g_j(\cdot)$, respectively.  

  Let $\mcM$ and $\mcM_c$ denote the classes of multiplicative and completely multiplicative functions defined as in (\ref{multq}) for $\delta\vert \sigma$, where $\sigma\in\S_n$.
  Given $g, h\in \mcM$, one can introduce the convolution
  \[
              q(\sigma)=\sum_{\sigma=\delta \tau} g(\delta)h(\tau)= \sum_{\delta\vert\sigma}g(\delta) h(\sigma/\delta).
               \]
  Here  the first summation is over the ordered decompositions of $\sigma$ into the product of divisors. Observe also that the relation
  \begin{equation}
               q_j(k)=\sum_{s=0}^{k} \binom{k}{s} g_j(s) h_j(k-s), \quad j\leq n,
  \label{conv}
  \end{equation}
  holds.   Indeed, for each $j\leq n$, any $\delta\vert \sigma$ has  an $s$-subset, $0\leq s\leq k_j(\sigma)=:k$,
   of the cycles of length $j$ from $\sigma$. The subset can be chosen 
  in $\binom{k}{s}$ ways and, moreover,  the values of $g$ on these subsets coincide, as do the values of $h$ on the $(k-s)$-subsets of the remaining cycles of the same length. Note also that the classes  $\mcM$ and $\mcM_c$ are closed under  convolution. The recalled toolkit will be used in what follows.
  
Given a nonnegative  $g\in\mcM$,  we define the multiplicative function
\begin{equation}
      f(\sigma):=\sum_{\delta\vert \sigma} g(\delta),
      \label{fg}
      \end{equation}
and the family of cumulative distribution functions supported by $[0,1]$:
   \begin{equation}
X_n(\sigma,t):=X_n(\sigma,t;g)=\frac{1}{f(\sigma)}\sum_{\substack{\delta\vert\sigma \\ |\delta|\leq tn}} g(\delta),
\quad \sigma\in \S_n, \; 0\leq t\leq 1.
\label{Xn}
\end{equation}
In particular, 
\[
    X_n(\sigma,t;1)=2^{-w(\sigma)}\#\big\{M\subset \N_n:\; \# M\leq tn,\; \sigma(M)=M\big\},
\quad \sigma\in \S_n, \; 0\leq t\leq 1,
\]
is the relative density of the $\sigma$-invariant subsets $M\subset\N_n$ with cardinality not exceeding $tn$. Indeed,
each  divisor $\delta\vert\sigma$ uniquely determines the vertex set $V(\delta)$ which is $\sigma$-invariant.

Let $\nu_n$ denote the uniform probability measure (Haar) on $\S_n$ and $\E_n$ be the expectation with respect to it. The henceforth used facts about uniformly sampled permutations can be found  in the book \cite{ABT}. We recall just relation (1.3) from page 11; namely,
\[
    \nu_n\big(\bar k(\sigma)=\bar s\big)={\mathbf1}\{\ell(\bar s)=n\} \prod_{j\leq n} \frac1{j^{s_j} s_j!},
    \]
    showing the dependence type of the coordinates of cycle vector. With respect to $\nu_n$, the process $X_n:=X_n(\sigma,t)$ is fairly mysterious. On the other hand, the monotonicity of its trajectories makes the problem  a bit easier to handle. The paper \cite{GBEM-24} witnesses asymptotic regularity of its mean value as $n\to\infty$. 

\begin{thm} \label{BM}
 Let $\vartheta$ be a positive constant and $g\in\mcM_c$ be defined by $g_j(1)=\vartheta$, where $j\leq n$. Then, uniformly in $0\leq t\leq 1$,
\[
  \E_n X_n(t)= \frac1{n!}\sum_{\sigma\in\S_n} X_n(\sigma, t; g)=B\big(t;\theta, 1-\theta\big)+O(n^{-\min\{\theta, 1-\theta\}}), 
   \]
   where $\theta:=\vartheta/(1+\vartheta)$ and 
   \[
   B(t; a,b)=\frac{\Gamma(a+b)}{\Gamma(a)\Gamma(b)} \int_0^t\frac{\rd v}{v^a(1-v)^b},
   \]
   for $0\leq t\leq 1$ and $0<a,b<1$,    is the two-parameter beta distribution function.
   \end{thm}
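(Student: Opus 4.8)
The plan is to reduce $\E_n X_n(t)$ to a finite sum of ratios of Gamma functions and then apply Stirling's formula. \emph{First}, since $g\in\mcM_c$ with $g_j(1)=\vartheta$, we have $g(\delta)=\prod_{j\le n}\vartheta^{k_j(\delta)}=\vartheta^{w(\delta)}$, so that each divisor contributes a weight depending only on its number of cycles $w(\delta)=\sum_{j\le n}k_j(\delta)$. As a divisor is precisely a sub-collection of the $w(\sigma)$ cycles of $\sigma$, summing over all of them gives the closed form
\[
  f(\sigma)=\sum_{\delta\vert\sigma}\vartheta^{w(\delta)}=(1+\vartheta)^{w(\sigma)},
\]
whence $X_n(\sigma,t;g)=(1+\vartheta)^{-w(\sigma)}\sum_{\delta\vert\sigma,\,|\delta|\le tn}\vartheta^{w(\delta)}$.

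\emph{Second}, I would unfold the average over $\S_n$ through the convolution structure. Writing $\sigma=\delta\tau$ with $\tau=\sigma/\delta$ and $m=|\delta|$, each ordered pair $(\delta,\tau)$, where $\delta$ permutes an $m$-subset $A\subset\N_n$ and $\tau$ permutes its complement, occurs exactly once, and $w(\sigma)=w(\delta)+w(\tau)$. Since $\vartheta^{w(\delta)}(1+\vartheta)^{-w(\sigma)}=\theta^{w(\delta)}(1-\theta)^{w(\tau)}$, grouping by $m\le tn$ and by the $\binom{n}{m}$ choices of $A$ yields
\[
  \E_n X_n(t)=\frac1{n!}\sum_{m\le tn}\binom{n}{m}\Big(\sum_{\delta\in\S_m}\theta^{w(\delta)}\Big)\Big(\sum_{\tau\in\S_{n-m}}(1-\theta)^{w(\tau)}\Big).
\]
The two inner sums are evaluated by the classical identity $\sum_{\pi\in\S_k}x^{w(\pi)}=x(x+1)\cdots(x+k-1)=\Gamma(x+k)/\Gamma(x)$; after cancelling $n!$ this gives the exact formula
\[
  \E_n X_n(t)=\frac1{\Gamma(\theta)\Gamma(1-\theta)}\sum_{m\le tn}\frac{\Gamma(m+\theta)}{m!}\cdot\frac{\Gamma(n-m+1-\theta)}{(n-m)!}.
\]
Taking $t=1$ and invoking the Chu--Vandermonde identity for rising factorials shows the complete sum equals $n!$, so that $\E_n X_n(1)=1$, a useful consistency check.

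\emph{Third}, I would pass to the limit. The ratio asymptotics $\Gamma(m+\theta)/\Gamma(m+1)=m^{\theta-1}\big(1+\rO(1/m)\big)$ and $\Gamma(n-m+1-\theta)/\Gamma(n-m+1)=(n-m)^{-\theta}\big(1+\rO(1/(n-m))\big)$ show that the summand with $m=\lfloor nv\rfloor$ is, up to the factor $1/n$, a sampling of $v^{\theta-1}(1-v)^{-\theta}/\big(\Gamma(\theta)\Gamma(1-\theta)\big)$, so the sum converges to
\[
  \frac1{\Gamma(\theta)\Gamma(1-\theta)}\int_0^t v^{\theta-1}(1-v)^{-\theta}\,\rd v,
\]
which is the two-parameter beta distribution function of the statement (the integrand being $v^{\theta-1}(1-v)^{-\theta}$; I would double-check the order of the parameters $\theta$ and $1-\theta$ against the displayed convention, since the weight $\theta$ is carried by the divisor $\delta$ rather than by $\sigma/\delta$).

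\emph{The main obstacle} is to make this convergence uniform in $t$ with the stated rate $\rO\big(n^{-\min\{\theta,1-\theta\}}\big)$. The ratio asymptotics are accurate only in the bulk $\e n\le m\le(1-\e)n$, where the Riemann-sum error is $\rO(1/n)$ uniformly; they degenerate exactly at the two ends $m=\rO(1)$ and $n-m=\rO(1)$, which is where the limiting density has its integrable singularities $v^{\theta-1}$ and $(1-v)^{-\theta}$. I would therefore estimate the two boundary blocks directly from the exact formula: the block near $m=0$ contributes an error $\rO(n^{-\theta})$ and the block near $m=n$ an error $\rO(n^{-(1-\theta)})$, the two combining to $\rO\big(n^{-\min\{\theta,1-\theta\}}\big)$. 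Securing these boundary estimates uniformly --- in particular when $t$ itself approaches $0$ or $1$ --- is the delicate point of the argument.
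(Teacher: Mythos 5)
Your derivation is sound, and there is nothing in the paper to compare it against line by line: Theorem \ref{BM} is not proved here at all --- it is imported from the companion paper \cite{GBEM-24}, where it is established (and, per the concluding remark, even in the more general Ewens setting). Your self-contained route is legitimate and essentially the natural one for the uniform measure. Steps one and two are exact and complete: $g(\delta)=\vartheta^{w(\delta)}$, $f(\sigma)=(1+\vartheta)^{w(\sigma)}$, the relabelling bijection for ordered decompositions $\sigma=\delta\tau$ (the same device the paper uses in deriving (\ref{free1})), and the Stirling-cycle identity $\sum_{\pi\in\S_k}x^{w(\pi)}=\Gamma(x+k)/\Gamma(x)$ give your exact formula; the summands are precisely the Dirichlet-multinomial (P\'olya) probabilities with parameters $(\theta,1-\theta)$, and your Chu--Vandermonde check that they sum to $1$ is correct. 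Your error calibration is also exactly right and in fact sharp: the single term $m=0$ equals $\Gamma(n+1-\theta)/(n!\,\Gamma(1-\theta))\asymp n^{-\theta}$ while the limiting distribution function vanishes at $t=0$, so the boundary losses $n^{-\theta}$ and $n^{-(1-\theta)}$ are unavoidable and dictate the rate $n^{-\min\{\theta,1-\theta\}}$; the bulk Riemann-sum discrepancies, summed termwise over $m$ independently of the cut point $\lfloor tn\rfloor$, are absorbed by the same bound, which is what makes the estimate uniform in $t$. Writing out these tail blocks is routine bookkeeping, so I would call the proposal correct with the last quantitative step sketched rather than gapped.

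Your flagged doubt about the parameter order is well founded, and resolving it exposes a notational slip in the paper rather than in your computation. With the paper's displayed definition
\[
B(t;a,b)=\frac{\Gamma(a+b)}{\Gamma(a)\Gamma(b)}\int_0^t\frac{\rd v}{v^{a}(1-v)^{b}},
\]
the object $B(t;\theta,1-\theta)$ has density proportional to $v^{-\theta}(1-v)^{\theta-1}$, i.e.\ the standard beta law with parameters \emph{swapped}. Your limit density $v^{\theta-1}(1-v)^{-\theta}/\bigl(\Gamma(\theta)\Gamma(1-\theta)\bigr)$ is the correct one: it agrees with the $l=1$ case of Corollary \ref{cor3} in this very paper, whose formula reduces to
\[
\frac{1}{\Gamma(\theta)\Gamma(1-\theta)}\int_0^t v^{\theta-1}(1-v)^{-\theta}\,\rd v,
\]
and also with the limiting behaviour of the exact sum (e.g.\ $\E_n X_n(t)\asymp t^{\theta}$ for small $t$, whereas the displayed $B(t;\theta,1-\theta)$ would give $t^{1-\theta}$). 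So the exponents in the paper's definition of $B(t;a,b)$ should be read as $1-a$ and $1-b$ (equivalently, the parameters in Theorem \ref{BM} should be swapped); the two conventions coincide only at $\theta=1/2$, the arcsine case of Corollary \ref{cor4}. Your proof, with the beta parameters as you computed them, is the internally consistent statement.
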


    In the present paper, we focus on the distribution of $X_n$ defined  by
 \begin{equation}
P_n(A):=\nu_n\cdot X_n^{-1} (A)=\nu_n \big(\sigma\in\S_n:\; X_n(\sigma, \cdot)\in A\big), 
\label{Expect}
\end{equation}
where $A\in \mathcal D$. Formulating the result, we take into account the Addendum \cite{GT-97} to the paper \cite{EMNMT-97}  by G. Tenenbaum.

\begin{thm} \label{Thm1} Let $g\in \mcM_c$ be defined by $g_j(1)=\vartheta$, where  $\vartheta$ is a positive constant and $j\geq 1$. The sequence of distributions $P_n$  converges weakly as $n\to\infty$ to a limiting measure $P$ supported by a subset of $\C[0,1]$.
\end{thm}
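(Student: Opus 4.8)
The plan is to establish weak convergence in the Skorokhod space $\D\big([0,1],\mathcal D\big)$ by the standard two-step strategy: convergence of finite-dimensional distributions together with tightness, followed by identification that the limit measure $P$ is supported on continuous paths. Since each trajectory $t\mapsto X_n(\sigma,t)$ is a (random) cumulative distribution function on $[0,1]$, it is nondecreasing, right-continuous, and takes values in $[0,1]$; this monotonicity is the structural feature I would exploit throughout, because it drastically simplifies tightness in $\D$.

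\textbf{Finite-dimensional distributions.} First I would fix $0\le t_1<\cdots<t_m\le 1$ and study the joint law of $\big(X_n(\sigma,t_1),\dots,X_n(\sigma,t_m)\big)$ under $\nu_n$. The increments $X_n(\sigma,t_{i})-X_n(\sigma,t_{i-1})$ are the $g$-weighted relative masses of divisors $\delta\mid\sigma$ whose size $|\delta|$ falls in the window $(t_{i-1}n, t_i n]$, normalised by $f(\sigma)$. Theorem~\ref{BM} already identifies the limit of the first moment of $X_n(\sigma,t)$ as the beta distribution function $B\big(t;\theta,1-\theta\big)$ with $\theta=\vartheta/(1+\vartheta)$, so the natural candidate for the one-dimensional marginal of the limit is $\mathrm{Beta}(\theta,1-\theta)$. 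To go from the mean to the full finite-dimensional law, I would compute (or invoke from \cite{EMNMT-97} and the Addendum \cite{GT-97}) the joint power moments $\E_n\prod_{i} X_n(t_i)^{p_i}$ and show they converge to the moments of a limiting random measure; the method of moments then pins down the finite-dimensional distributions, provided the limiting moment sequence satisfies a uniqueness (Carleman-type) criterion, which holds here because the variables are bounded in $[0,1]$.

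\textbf{Tightness.} For tightness in the Skorokhod topology I would use the monotonicity crucially. Because every $X_n(\sigma,\cdot)$ is a nondecreasing function bounded between $0$ and $1$, the total oscillation is controlled, and the modulus-of-continuity criterion (Theorem~13.2 or its monotone-function corollary in \cite{Bil}) reduces to a bound of the form $\E_n\big[\big(X_n(t)-X_n(s)\big)\big(X_n(u)-X_n(t)\big)\big]\le C\,\psi(u)-\psi(s))^{2}$ for $s\le t\le u$, with a continuous nondecreasing $\psi$; such a bound follows from the second-moment estimates on divisor masses in short size-windows, again supplied by the number-theoretic results. This controls the jumps and yields tightness together with convergence of the marginals at a dense set of continuity points.

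\textbf{Continuity of the limit.} Finally, to show $P$ is supported on $\C[0,1]$, I would argue that the limiting finite-dimensional laws have no atoms and that the maximal jump $\sup_t\big(X_n(t)-X_n(t^-)\big)$ converges to $0$ in probability. The individual summand $g(\delta)/f(\sigma)$ contributed by a single divisor of a fixed size is negligible: a lone cycle of length $j$ carries weight $\vartheta^{\text{(number of cycles)}}$, and the normalisation by $f(\sigma)=\prod_{j}(1+\vartheta)^{k_j(\sigma)}$ forces each jump to be of order $o(1)$ uniformly, since the typical cycle counts and the beta-type limiting density are absolutely continuous. I expect the main obstacle to lie precisely in the tightness step, namely in converting the delicate short-window divisor-counting estimates into a uniform two-increment moment bound with the correct $\psi$; the dependence structure of the cycle vector under $\nu_n$ and the non-independence of divisor masses across overlapping size-windows make this the technically heaviest part, whereas the finite-dimensional convergence and the continuity of the limit then follow comparatively directly from the moment formulas.
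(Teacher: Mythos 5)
Your overall architecture (fidis plus tightness, then continuity of the limit support) matches the paper, and you correctly spotted that monotonicity of the trajectories is the key structural asset. But both pillars of your plan have genuine gaps. The most serious one is in the finite-dimensional step: you propose the method of moments, with the joint moments to be ``computed or invoked from \cite{EMNMT-97} and \cite{GT-97}.'' Those papers concern divisors of random \emph{integers}; their moment formulas cannot simply be invoked for the permutation process, because the assertion that the permutation process has the same limit is precisely what is being proved (the paper only identifies its limit with that of \cite{EMNMT-97} \emph{after} independently deriving it). Your proposal contains no mechanism for actually computing anything about $X_n$ under $\nu_n$. The paper's mechanism is the friable--free factorization: Proposition \ref{prop1} shows that replacing $X_n(\sigma,\cdot)$ by the process driven by the $(\e n)$-free divisor $\sigma(\e)$ costs only $O\big((\log(1/\e))^{-1/2}\big)$ in probability, using a Tur\'an--Kubilius-type second-moment inequality for $w(\sigma(\e))$ and the $r$-free counts of Lemma \ref{Lem3}; Proposition \ref{prop2} then conditions on the friable part via the Dickman function (Lemma \ref{Lem2}), discards repeated long-cycle lengths, and rewrites the joint distribution function $F_n(U,T)$ as a Riemann sum over $m$-tuples of long cycle lengths, converging to explicit integrals. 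Note that the paper works directly with distribution functions, not moments, so no moment-uniqueness discussion is even needed; without some such reduction your moment computation has no starting point and the limiting fidis remain uncharacterized.

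The tightness and continuity steps also have concrete problems. Your two-increment Chentsov-type bound $\E_n\big[(X_n(t)-X_n(s))(X_n(u)-X_n(t))\big]\le C\big(\psi(u)-\psi(s)\big)^2$ is asserted, not proved, and the ``second-moment estimates on divisor masses in short size-windows'' you appeal to are again number-theoretic results about integers, not available off the shelf for permutations; establishing them would amount to the two-dimensional analogue of the whole fidi computation. Moreover, even granted, that criterion gives tightness in $\D[0,1]$ but not support of $P$ in $\C[0,1]$, and your route to continuity is flawed: you argue via ``no atoms'' of the limiting laws, but Corollary \ref{cor4}(ii) shows that (in the case $g\equiv 1$) the limiting increments are \emph{purely discrete}, supported on dyadic rationals, while the limit paths are nonetheless continuous (indeed singular, with $\varphi'(t)=0$ a.e.\ by Corollary \ref{cor4}(iii)); so non-atomicity is both false in general and not the right tool. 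Your maximal-jump estimate is likewise heuristic, since divisors of equal size aggregate into a single jump and ``each summand is $o(1)$'' does not bound it. The paper sidesteps all of this with one stroke (Proposition \ref{prop3}, following Tenenbaum \cite{GT-97}): by monotonicity the modulus of continuity $Q_n(\sigma,a)$ is exactly the concentration function of the random variable $|\delta|/n$ with weights $g(\delta)/f(\sigma)$, which is bounded via Lemma 2.6.1 of \cite{GT-95} by an integral of the characteristic-function-type quantity $|G(\sigma,v)|$; since $|G(\sigma,v)|$ is a bounded completely multiplicative function, Lemma \ref{Lem1} gives $\E_n Q_n(\sigma,a)\ll a^{\vartheta/(1+\vartheta)}$, a single first-moment bound that simultaneously yields tightness and, via Theorem 15.5 of \cite{Bil}, that the limit is supported by a subset of $\C[0,1]$. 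You correctly anticipated that tightness is delicate, but the concentration-function device, together with the short-cycle elimination, is the missing substance of the proof.
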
 
  
  Since $0\leq X_n(t)\leq 1$, by the dominated convergence theorem, the weak convergence 
of measures $P_n\Rightarrow P$ implies convergence of the moments.   

\begin{cor} \label{cor1} Under the conditions of Theorem $\ref{Thm1}$,  for a fixed $l\in\N$ and uniformly in $\bar t:=(t_1,\dots, t_l)\in [0,1]^l$, the following relation for the mixed  moments holds: 
   \begin{equation}
     \lim_{n\to\infty}  \E_n \Big(\prod_{i\leq l} X_n(t_i)\Big) =\int_{\D} \prod_{i\leq l} \varphi(t_i) \rd P(\varphi)=:E(l, \bar t).
  \label{Joint}
  \end{equation}
  \end{cor}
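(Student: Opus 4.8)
The plan is to recognise the mixed moment as the integral of a bounded functional against the pushforward measure $P_n$ and then invoke the weak convergence $P_n\Rightarrow P$ from Theorem \ref{Thm1}, the only genuine work being the passage from pointwise to uniform convergence in $\bar t$. Via the change-of-variables formula for the image measure $P_n=\nu_n\cdot X_n^{-1}$,
\[
\E_n\Big(\prod_{i\leq l}X_n(t_i)\Big)=\int_{\D}\prod_{i\leq l}\varphi(t_i)\,\rd P_n(\varphi)=:\int_\D F_{\bar t}(\varphi)\,\rd P_n(\varphi),
\]
so the left-hand side of (\ref{Joint}) is the $P_n$-integral of the functional $F_{\bar t}(\varphi):=\prod_{i\leq l}\varphi(t_i)$. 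Since every $X_n(\sigma,\cdot)$ takes values in $[0,1]$, we have $0\leq F_{\bar t}\leq 1$, so $F_{\bar t}$ is bounded and measurable on $\D$.

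Next I would establish $\int_\D F_{\bar t}\,\rd P_n\to\int_\D F_{\bar t}\,\rd P$ for each fixed $\bar t$. The coordinate projections $\varphi\mapsto\varphi(t_i)$ fail to be continuous everywhere on the Skorokhod space, but they are continuous at each $\varphi\in\C[0,1]$: Skorokhod convergence to a continuous limit is uniform convergence (Chapter 3 of \cite{Bil}), whence $F_{\bar t}$, being a finite product of such projections, is continuous at every point of $\C[0,1]$. As $P$ is supported by $\C[0,1]$ by Theorem \ref{Thm1}, the discontinuity set of $F_{\bar t}$ is $P$-negligible, and the portmanteau theorem for bounded, $P$-a.e.\ continuous functionals gives $\int_\D F_{\bar t}\,\rd P_n\to\int_\D F_{\bar t}\,\rd P=E(l,\bar t)$, i.e.\ (\ref{Joint}) for each fixed $\bar t$.

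Finally, to obtain the claimed uniformity in $\bar t\in[0,1]^l$ I would exploit monotonicity. Since each trajectory $X_n(\sigma,\cdot)$ is a nondecreasing distribution function and the factors are nonnegative, the map $\bar t\mapsto\E_n\big(\prod_{i\leq l}X_n(t_i)\big)$ is nondecreasing in every coordinate, as is its limit $E(l,\bar t)$. Dominated convergence (using $0\leq F_{\bar t}\leq 1$ together with the $P$-a.s.\ continuity of $\varphi$) shows that $E(l,\cdot)$ is continuous, hence uniformly continuous, on the compact cube $[0,1]^l$. I would then run a multivariate P\'olya-type sandwich argument: choose a finite grid fine enough that $E(l,\cdot)$ varies by less than $\e$ across each cell, bound $\E_n\big(\prod_{i\leq l}X_n(t_i)\big)$ between its values at the lower and upper corners of the cell containing $\bar t$ by coordinate-wise monotonicity, and then use the pointwise convergence already established at the finitely many grid points. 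This forces $\sup_{\bar t}\big|\E_n\big(\prod_{i\leq l}X_n(t_i)\big)-E(l,\bar t)\big|\to0$. The main obstacle is precisely this last step, converting the soft pointwise consequence of weak convergence into a uniform one, where the monotone structure of the distribution-function-valued trajectories is indispensable.
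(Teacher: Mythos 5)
Your proposal is correct and follows essentially the same route as the paper, which disposes of the corollary in a single sentence: since $0\leq X_n(t)\leq 1$, the weak convergence $P_n\Rightarrow P$ together with dominated convergence yields convergence of the moments. Your write-up additionally makes explicit two points the paper leaves implicit --- that the coordinate projections, though not globally continuous on $\D$, are continuous at every point of $\C[0,1]$, where $P$ is supported, so the mapping theorem legitimately applies, and the monotonicity-plus-P\'olya sandwich argument that upgrades pointwise convergence to the uniformity in $\bar t$ which the corollary asserts but the paper does not argue.
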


If $l=1$, one recovers the assertion of Theorem \ref{BM} without the remainder term estimate. 
As Proposition \ref{prop1} will show, the finite-dimensional distributions of the limit process are the same as those for the  process appearing firstly in the number-theoretical paper \cite{EMNMT-97}. Hence the limit processes coincide.
From this, we gain the already found  expressions of the limit moments. Let us start with the simple case.

\begin{cor} \label{cor2} Assume that the conditions of Theorem $\ref{Thm1}$ hold, $l=2$, $\bar t=(t,t)$ where $0\leq t\leq 1$, and $\theta=\vartheta/(1+\vartheta)$. Then
   \[
     E(2, \bar t) =
     \begin{cases}I(t,1-\theta) \; &\text{if}\   0\leq t\leq 1/2,\\
     I(1-t,\theta)+2B(t; \theta, 1-\theta)-1\; & \text{if}\  1/2<t\leq 1.\end{cases}
     \]
     Here
     \[
        I(t,a)=\frac{2}{\Gamma(a^2)\Gamma(b^2)\Gamma^2(ab)}\int_0^t\frac{\rd w}{w^{1-ab}}\int_0^{t-w}\frac{\rd v}{v^{1-b^2}}
        \int_0^w\frac{ u^{ab-1}\rd u}{(1-w-v-u)^{1-a^2}}
        \]
        if $0\leq t\leq 1/2$, $0<a<1$, and  $b=1-a$.
      \end{cor}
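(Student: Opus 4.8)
The plan is to invoke Corollary~\ref{cor1} with $l=2$ and $\bar t=(t,t)$, which reduces the problem to the explicit evaluation of $E(2,(t,t))=\lim_n\E_n X_n(t)^2$. Since $g\in\mcM_c$ with $g_j(1)=\vartheta$ gives $g(\delta)=\vartheta^{w(\delta)}$ and $f(\sigma)=(1+\vartheta)^{w(\sigma)}$, I would first rewrite
\[
X_n(\sigma,t)=\frac1{(1+\vartheta)^{w}}\sum_{\substack{\delta\mid\sigma\\ |\delta|\le tn}}\vartheta^{w(\delta)}
=\Pr\Big(\sum_i B_i L_i\le tn\ \Big|\ \sigma\Big),
\]
where $L_i$ are the cycle lengths of $\sigma$ and the $B_i$ are independent Bernoulli variables retaining each cycle with probability $\theta=\vartheta/(1+\vartheta)$. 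Squaring introduces a second, independent retention $B_i'$, so that $\E_n X_n(t)^2$ sorts the cycles of a $\nu_n$-random $\sigma$ into four colour classes --- kept by both, by the first only, by the second only, by neither --- with colour probabilities $\theta^2,\ \theta(1-\theta),\ \theta(1-\theta),\ (1-\theta)^2$.

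The decisive structural input is that, as \n, the normalised total lengths $(p_{11},p_{10},p_{01},p_{00})$ of the four colour classes converge jointly to the Dirichlet law with parameters $\big(\theta^2,\theta(1-\theta),\theta(1-\theta),(1-\theta)^2\big)$; this is precisely the coincidence of finite-dimensional distributions with the number-theoretic process supplied by Proposition~\ref{prop1}, resting on \cite{EMNMT-97} and \cite{GT-97}. Since $|\delta_1|/n\to p_{11}+p_{10}$ and $|\delta_2|/n\to p_{11}+p_{01}$, the two constraints $|\delta_i|\le tn$ pass to the limit as
\[
E(2,(t,t))=\Pr\big(p_{11}+p_{10}\le t,\ p_{11}+p_{01}\le t\big).
\]
With $a=1-\theta$ and $b=\theta$ the Dirichlet density carries exactly the exponents $ab,\,b^2,\,ab,\,a^2$ and the constant $1/\big(\Gamma(a^2)\Gamma(b^2)\Gamma^2(ab)\big)$ of $I(t,a)$, the total parameter being $(\theta+(1-\theta))^2=1$.

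For $0\le t\le1/2$ I would integrate this density over the stated region directly, identifying $v=p_{11}$ (common), $z=1-w-v-u=p_{00}$ (neither), and the two ``only'' masses with $w\ge u$. The ordering $u\le w$ renders $p_{11}+p_{01}\le p_{11}+p_{10}\le t$ automatic and accounts for the symmetry factor $2$, while $w+v+u\le 2t\le1$ makes the constraint $p_{00}\ge0$ vacuous --- this is why the threshold sits at $1/2$. One obtains $E(2,(t,t))=I(t,1-\theta)$. For $1/2<t\le1$ I would use inclusion--exclusion on $A_i=\{|\delta_i|/n>t\}$, giving
\[
E(2,(t,t))=1-\Pr(A_1)-\Pr(A_2)+\Pr(A_1\cap A_2),
\]
with $\Pr(A_i)=1-B(t;\theta,1-\theta)$ since $p_{11}+p_{10}$ is $\mathrm{Beta}(\theta,1-\theta)$-distributed by Theorem~\ref{BM}. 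Rewriting $A_1\cap A_2$ as $\{p_{00}+p_{01}<1-t,\ p_{00}+p_{10}<1-t\}$ turns it into the $t\le1/2$ problem with the $\theta^2$- and $(1-\theta)^2$-classes exchanged, i.e.\ with $a\leftrightarrow b$, so $\Pr(A_1\cap A_2)=I(1-t,\theta)$; collecting terms yields $I(1-t,\theta)+2B(t;\theta,1-\theta)-1$.

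The hard part will be the structural input of the second paragraph: rigorously justifying the Dirichlet limit of the four colour masses and the interchange of the limit with the constrained summation, uniformly in $t$. This is the genuinely number-theoretic core --- the permutation analogue of the Deshouillers--Dress--Tenenbaum divisor law --- and is exactly where Proposition~\ref{prop1} together with \cite{EMNMT-97} and \cite{GT-97} must be brought in; everything downstream is a Dirichlet integral and an inclusion--exclusion. A secondary point needing care is the exact matching of the normalising constant and of the symmetry factor $2$, so that the Dirichlet integral reproduces the stated $I(t,a)$ rather than a constant multiple of it.
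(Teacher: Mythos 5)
Your argument is correct and reproduces the stated formula, but it takes a genuinely different route from the paper. The paper disposes of Corollary \ref{cor2} in one line: having identified the limit process with the arithmetic one of \cite{EMNMT-97} (the coincidence of finite-dimensional distributions emerging from the limiting integrals in the proof of Proposition \ref{prop2}), it simply quotes Theorem 2.2 of Bareikis and Ma\v{c}iulis \cite{GBAM-RamJ15}, where this second moment was already evaluated in the number-theoretic setting. You instead re-derive the formula probabilistically on the permutation side: the Bernoulli-thinning representation of $X_n(\sigma,t)$ with retention probability $\theta$, the four colour classes for the square, the limiting Dirichlet law with parameters $(b^2,ab,ab,a^2)$, $a=1-\theta$, $b=\theta$, then a direct Dirichlet integral. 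Your bookkeeping checks out: the normalising constant is $\Gamma\big((a+b)^2\big)/\big(\Gamma(a^2)\Gamma(b^2)\Gamma^2(ab)\big)$ with $(a+b)^2=1$; restricting to $u\le w$ makes $v+u\le t$ automatic and produces the factor $2$; the bound $w+v+u\le 2t\le 1$ explains why the case split sits at $t=1/2$; the aggregation $\mathrm{Beta}(b^2+ab,\,ab+a^2)=\mathrm{Beta}(\theta,1-\theta)$ gives $\Pr(A_i)=1-B(t;\theta,1-\theta)$, and the swap $p_{11}\leftrightarrow p_{00}$, i.e.\ $a\leftrightarrow b$ with $t\mapsto 1-t$, gives $\Pr(A_1\cap A_2)=I(1-t,\theta)$, whence $I(1-t,\theta)+2B(t;\theta,1-\theta)-1$. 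What your route buys is a self-contained, conceptual explanation of the shape of $I(t,a)$, and it scales to all $l$ with $2^l$ colours, which would re-derive Corollary \ref{cor3} rather than quoting \cite{BreGT-AAP16}; what the paper's route buys is brevity and reuse of the arithmetic computations.

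The one point to repair is the attribution of your structural input. The joint Dirichlet limit of the four colour masses is true, but Proposition \ref{prop1} does not supply it: that proposition only shows that short cycles are negligible, and no statement in the paper asserts a Dirichlet law for coloured cycle masses. You must either cite it as a known fact about independent colour-splitting of the cycle structure under the uniform measure (it follows from the Poisson--Dirichlet limit of the normalised cycle lengths, cf.\ \cite{ABT}), or extract it from the machinery of Proposition \ref{prop2} by running the same reduction --- removal of short cycles, relabelling, summation over $m$-tuples of long cycle lengths weighted by $\rho$ --- for the event that the four colour masses lie in a given box. Once that limit is in hand, the ``interchange'' you flag as delicate is actually automatic: $\E_n X_n(t)^2$ equals exactly the probability that the two constrained colour sums are at most $tn$, so weak convergence plus absolute continuity of the Dirichlet law (portmanteau, null boundary) yields the limit for each fixed $t$, and no uniformity in $t$ is needed to evaluate $E(2,\bar t)$.
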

To check this, it suffices to apply Theorem 2.2 proved by G. Bareikis and A. Ma\v{c}iulis \cite {GBAM-RamJ15}. 
A few computer drawn sketches of $I'_t(t,a)$ are also exposed in this paper.
Generalizing the latter and the previous paper \cite{GBAM-AA12} by the same authors, R. de la Breteche and G. Tenenbaum \cite{BreGT-AAP16} succeeded in writing rather complex formulas  (see (1.7) in their paper) for all $E(l,\bar t)$ in (\ref{Joint}). For the reader's convenience, we include them.

  Let $l\geq 1$ be fixed, $r=2^l-1$, and $\bar v=(v_1,\dots, v_r), \, \bar t=(t_1,\dots, t_r)\in\R^r$.
   In what follows, the inequality $\bar u\leq \bar t$ will mean $u_j\leq t_j$ for each $j\leq r$. Set  $ s(\bar v):=v_1+\cdots+ v_r$.  
 For a nonnegative integer $m$, introduce the base-$2$ digits $d_j(m)\in\{0,1\}$ of $m$, the sum of digits $d(m)=d_0(m)+d_1(m)+\cdots$,  and 
  \[
    u_j(v):=\sum_{1\leq m\leq r} d_j(m) v_m,  \qquad  \bar u(\bar v):=\big(u_1(v), \dots,u_r(v)\big).
    \]
  Define the region
     \[
     \Omega(\bar t):=\big\{\bar v\in[0,1]^r:\;  \bar u(\bar v)\leq \bar t,\, s(\bar v)\leq 1\big\}.
     \]
     
\begin{cor} \label{cor3} Assume that the conditions of Theorem $\ref{Thm1}$ hold, $l\geq 1$, and $\bar t=(t_1,\dots, t_r)\in ]0,1]$. Then
   \begin{align*}
     E(l, \bar t) =\prod_{0\leq m\leq r} &\Gamma\Big(\vartheta^{d(m)}(1+\vartheta)^{-r}\Big)^{-1}\\
     &\quad \times
     \int_{\Omega(\bar t)} \prod_{1\leq m\leq r} v_m^{\vartheta^{d(m)}(1+\vartheta)^{-r}-1}
     \big(1-s(\bar v)\big)^{
     (1+\vartheta)^{-r}-1} \rd \bar v.
     \end{align*}
     \end{cor}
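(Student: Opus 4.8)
The plan is to sidestep a direct asymptotic analysis of $\E_n\prod_{i\le l}X_n(t_i)$ and instead reduce the evaluation of $E(l,\bar t)$ to a formula already secured in the literature. By Corollary~\ref{cor1} the limits $E(l,\bar t)$ exist and are continuous in $\bar t$, so only their values remain to be identified. The decisive leverage is Proposition~\ref{prop1}: once the finite-dimensional distributions of the limit measure $P$ are shown to coincide with those of the process introduced in \cite{EMNMT-97}, the joint moments $E(l,\bar t)=\int_{\D}\prod_{i\le l}\varphi(t_i)\,\rd P(\varphi)$ must agree with the corresponding joint moments of that number-theoretical process. The latter were evaluated in closed form by R.~de la Bret\`eche and G.~Tenenbaum \cite{BreGT-AAP16}, and their formula~(1.7) is exactly the right-hand side asserted here. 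Thus the argument splits into two moves: a probabilistic identification of the processes, and a faithful transcription of \cite{BreGT-AAP16} into the present notation.

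To make the identification transparent I would first record the probabilistic meaning of $X_n$. Since $g\in\mcM_c$ with $g_j(1)=\vartheta$, a short computation from (\ref{fg}) gives $f(\sigma)=(1+\vartheta)^{w(\sigma)}$, and then $g(\delta)/f(\sigma)$ is precisely the probability of obtaining $\delta$ when each cycle of $\sigma$ is retained independently with probability $\theta=\vartheta/(1+\vartheta)$. Consequently $X_n(\sigma,t)$ equals the probability that such a $\theta$-random sub-collection $\delta$ satisfies $|\delta|\le tn$. For the product $\prod_{i\le l}X_n(t_i)$ one draws $l$ independent such sub-collections $\delta_1,\dots,\delta_l$ and labels every cycle by the set $S\subseteq\{1,\dots,l\}$ of indices $i$ for which the cycle lies in $\delta_i$; a given cycle acquires label $S$ with probability $\theta^{|S|}(1-\theta)^{l-|S|}$, independently of the others. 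Encoding the nonempty labels by the integers $1\le m\le r$ through the base-$2$ digits $d_j(m)$ turns the constraints $|\delta_j|\le t_jn$ into $u_j(\bar v)\le t_j$ and leaves the residual mass $1-s(\bar v)$ to the empty label, which is exactly the bookkeeping encoded in the region $\Omega(\bar t)$.

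Next I would invoke the limiting law of the vector of normalized label-masses $\bar v$ under $\nu_n$. The normalized cycle lengths of a uniform permutation converge to the Poisson--Dirichlet law (see \cite{ABT}), and the labeling above is independent across cycles; hence the label-masses converge to a Dirichlet-type density on the simplex $\{\bar v\ge 0,\ s(\bar v)\le 1\}$ whose exponents are assembled from the powers $\vartheta^{d(m)}$, with reciprocal Gamma values as the normalizing constants. Integrating the product of the $l$ threshold indicators against this density then yields an integral over $\Omega(\bar t)$ of exactly the advertised shape. The interchange of the limit $n\to\infty$ with this expansion is legitimate because $0\le X_n(t)\le 1$, just as in the derivation of Corollary~\ref{cor1}.

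The main obstacle is not the combinatorial bookkeeping but the exact reconciliation of the present product moment with the $r$-parameter formula (1.7) of \cite{BreGT-AAP16}, together with the precise matching of its exponents and Gamma constants, and here two features demand care. First, the involution $\delta\leftrightarrow\sigma/\delta$, i.e.\ $t\leftrightarrow 1-t$ coupled with $\theta\leftrightarrow 1-\theta$, relates the permutation process to the number-theoretical one and must be tracked so that the $\vartheta$-powers and the factor $(1+\vartheta)^{-r}$ carry the correct orientation; the case $l=1$, where $r=1$ and the formula collapses to a beta integral, is a convenient consistency test against Theorem~\ref{BM}, and $l=2$ should reproduce Corollary~\ref{cor2} via \cite{GBAM-RamJ15}. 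Second, the derivation of the Dirichlet exponents and of the Gamma normalization in \cite{BreGT-AAP16} rests on a delicate Selberg--Delange type saddle-point analysis; this I would import wholesale rather than reconstruct. With the identification furnished by Proposition~\ref{prop1} in place, what remains is therefore the careful translation of their result into the notation of the present paper.
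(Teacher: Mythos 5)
Your proposal is correct and takes essentially the same route as the paper: the paper also obtains Corollary~\ref{cor3} by identifying the limit process with the number-theoretical one of \cite{EMNMT-97} (the identification being furnished by Propositions~\ref{prop1} and~\ref{prop2} within the proof of Theorem~\ref{Thm1}) and then transcribing formula (1.7) of \cite{BreGT-AAP16} verbatim, with the cases $l=1$ and $l=2$ serving as the same consistency checks against Theorem~\ref{BM} and Corollary~\ref{cor2} that you name. Your middle paragraph on the Poisson--Dirichlet splitting of cycle masses is extra intuition not present in the paper, but since you ultimately import the de la Bret\`eche--Tenenbaum analysis wholesale rather than rederive it, it does not change the argument.
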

If $l=1$, this equals $B(t;\vartheta/(1+\vartheta), 1/(1+\vartheta)$ as in Theorem \ref{BM}. If $l=2$, we return to  Corollary \ref{cor2}.

 Finally, it is worth reckoning the already mentioned case related to the $\sigma$-invariant subsets.
 
 \begin{cor} \label{cor4} Assume that the process $X_n$ is defined as in $(\ref{fg})$ and $(\ref{Xn})$ via $g(\delta)\equiv1$. Then
  \begin{align*}
  (i)& \quad    \E_n X_n(t) =2\pi^{-1}\operatorname{arcsin}\sqrt{t} +O\big(n^{-1/2}\big), 
  \quad 0\leq t\leq 1;\\
  (ii)&\quad  \text{for all}\; 0<s<t<1,\;  \text{the increments} \; X_n(t)-X_n(s)\;
   \text{converge in }\\
     & \text{ distribution to the discrete random variable whose values are dyadic}
    \\
    & \text{ rational numbers};\\
   (iii)
   &\quad \text{if}\; \varphi(t) \; \text{is a trajectory of the limit process, then}\; 
    \text{P-almost surely}\; \\
   &\quad 
    \varphi'(t)=0\; \text{for every}\; 0<t<1. 
     \end{align*}
   \end{cor}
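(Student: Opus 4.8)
The plan is to handle the three parts separately, with (ii) and (iii) resting on a single probabilistic description of the limit trajectories. Part (i) is immediate from Theorem \ref{BM}: the choice $g(\delta)\equiv1$ forces $\vartheta=1$, hence $\theta=1-\theta=1/2$, and
\[
B(t;\tfrac12,\tfrac12)=\frac{\Gamma(1)}{\Gamma(1/2)^2}\int_0^t\frac{\rd v}{\sqrt{v(1-v)}}=\frac2\pi\operatorname{arcsin}\sqrt t,
\]
while the remainder $O(n^{-\min\{\theta,1-\theta\}})$ becomes $O(n^{-1/2})$.

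For (ii) and (iii) I would first recast the process probabilistically. Since $g\equiv1$ we have $f(\sigma)=2^{w(\sigma)}$, so $X_n(\sigma,t)=2^{-w(\sigma)}\#\{\delta\vert\sigma:|\delta|\le tn\}$ is the conditional distribution function of $W_n:=n^{-1}\sum_i\xi_i\ell_i$, where the cycles of $\sigma$ have lengths $\ell_i$ and $\xi_i\in\{0,1\}$ are independent fair coins marking membership of the $i$-th cycle in $\delta$. Ranking the normalised lengths as $\lambda_1\ge\lambda_2\ge\cdots$, the classical convergence of the cycle structure under $\nu_n$ gives $(\lambda_1,\lambda_2,\dots)\Rightarrow PD(1)$. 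The cycles shorter than $\e n$ contribute to $W_n$ a term of conditional mean $\tfrac12\sum_{\ell_i\le\e n}\lambda_i$ and conditional variance $\tfrac14\sum_{\ell_i\le\e n}\lambda_i^2\le\tfrac14\e$; letting $n\to\infty$ and then $\e\to0$ this part concentrates, so that, conditionally on the limiting sequence $\lambda=(\lambda_1,\lambda_2,\dots)$, $W_n\to W:=\sum_{i\ge1}\xi_i\lambda_i$. Thus a trajectory of the limit process is the random distribution function $\varphi(t)=\varphi_\lambda(t):=\mathbf{P}(W\le t\mid\lambda)$, whose average $\E\varphi_\lambda$ is, by (i), the arcsine function. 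Conditioning on $\xi_1$ gives the self-similarity relation
\[
\varphi_\lambda(t)=\tfrac12\varphi_{\lambda'}(t)+\tfrac12\varphi_{\lambda'}(t-\lambda_1),\qquad\lambda'=(\lambda_2,\lambda_3,\dots),
\]
which, iterated, shows that whenever $t$ lies in a gap of the support $A_\lambda:=\{\sum_i b_i\lambda_i:b_i\in\{0,1\}\}$ of $W$, the value $\varphi_\lambda(t)$ is a finite sum of terms $2^{-k}$, i.e.\ a dyadic rational.

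The decisive input, and the main obstacle, is the claim that the conditional law of $W$ is almost surely \emph{singular}: for $\lambda\sim PD(1)$ the set $A_\lambda$ is a.s.\ Lebesgue-null, and for every fixed $t\in(0,1)$ one has $\mathbf{P}(t\in A_\lambda)=0$, so that $t$ a.s.\ falls into an open gap. This is delicate precisely because the ranked $PD(1)$ atoms decay at the borderline geometric rate $\E\lambda_k\asymp2^{-k}$ (at which the deterministic weights $\lambda_k=2^{-k}$ would make $W$ uniform), so singularity must be extracted from the fluctuations of the atoms; I would obtain it from the stick-breaking (GEM) representation of $PD(1)$ together with the gap criterion $\lambda_k>\sum_{i>k}\lambda_i$, or else invoke the corresponding results of \cite{EMNMT-97} and the Addendum \cite{GT-97} identifying this limit law (cf.\ Proposition \ref{prop1}). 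There is no conflict with (i): the absolutely continuous arcsine law is a mixture over $\lambda$ of these a.s.\ singular conditional laws, so differentiation in $t$ does not commute with the expectation over $\lambda$.

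Granting singularity, both remaining parts follow cleanly. For (ii), fix $0<s<t<1$; almost surely $s$ and $t$ lie in open gaps of $A_\lambda$, where $\varphi_\lambda$ is locally constant and, by the displayed iteration, equal to dyadic rationals, so $\varphi_\lambda(t)-\varphi_\lambda(s)$ is a.s.\ a dyadic rational. A random variable supported on the countable set of dyadic rationals is purely atomic, which is exactly the asserted convergence of $X_n(t)-X_n(s)$ to a discrete, dyadic-valued limit. For (iii), fix $t\in(0,1)$; almost surely $t$ lies in an open gap on which $\varphi_\lambda$ is constant, whence $\varphi_\lambda'(t)=0$. Thus for every $0<t<1$ the derivative of a limit trajectory vanishes $P$-almost surely, completing the proof modulo the singularity lemma.
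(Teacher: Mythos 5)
Your part (i) coincides with the paper's own treatment: the paper disposes of it in one sentence as the special case $\vartheta=1$, $\theta=1/2$ of Theorem \ref{BM}, and your evaluation $B(t;\tfrac12,\tfrac12)=\tfrac2\pi\operatorname{arcsin}\sqrt t$ with remainder exponent $\min\{\theta,1-\theta\}=1/2$ is correct. For (ii) and (iii) you take a genuinely different route. The paper gives no internal argument at all: it notes (via Propositions \ref{prop1} and \ref{prop2}) that the limit process coincides with the one arising for natural divisors in \cite{EMNMT-97}, and then simply quotes Tenenbaum \cite{GT-AIF79}, where (ii) and (iii) are established for that arithmetic process. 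You instead rebuild the limit object inside the permutation model: $X_n(\sigma,\cdot)$ as the conditional distribution function of the coin-flip subset sum $W_n=n^{-1}\sum_i\xi_i\ell_i$, convergence of normalised cycle lengths to $PD(1)$, the trajectory $\varphi_\lambda(t)=\mathbf{P}\big(\sum_i\xi_i\lambda_i\le t\mid\lambda\big)$, and the iterated self-similar recursion. Your gap/dyadic argument is sound: if $\operatorname{dist}(t,A_\lambda)=\eta>0$, pick $k$ with $\Lambda_k:=\sum_{i>k}\lambda_i<\eta$; each partial sum $\sum_{i\le k}b_i\lambda_i$ lies in $A_\lambda$, so after $k$ iterations every branch contributes $0$ or $1$ and $\varphi_\lambda(t)=2^{-k}\#\{\bar b\in\{0,1\}^k:\ \sum_{i\le k}b_i\lambda_i<t\}$, constant near $t$. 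What your route buys is a self-contained probabilistic proof in the permutation setting; what it costs is that you must supply the singular-support input that the paper imports wholesale from \cite{GT-AIF79}.

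You flagged that input honestly, and your plan does close it, though one detail should be corrected: the gap criterion $\lambda_k>\sum_{i>k}\lambda_i$ is neither needed nor a.s. eventually valid; Lebesgue-nullity of $A_\lambda$ suffices and follows cleanly from the size-biased (GEM) order. With $\lambda_k^{*}=U_k\prod_{i<k}(1-U_i)$, $U_i$ i.i.d. uniform, the tail is $\Lambda_k^{*}=\prod_{i\le k}(1-U_i)$, and the strong law gives $k^{-1}\log(2^k\Lambda_k^{*})\to\log 2-1<0$ a.s.; since $A_\lambda$ is covered by $2^k$ intervals of length $\Lambda_k^{*}$, it is a.s. null. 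Fubini yields $\mathbf{P}(t\in A_\lambda)=0$ for almost every $t$, and the stick-breaking self-similarity $A_\lambda\stackrel{d}{=}(1-U_1)\widetilde A\cup\big(U_1+(1-U_1)\widetilde A\big)$ upgrades this to every fixed $t\in(0,1)$, because the maps $u\mapsto t/(1-u)$ and $u\mapsto(t-u)/(1-u)$ meet the exceptional values only on a $u$-null set. Two further remarks. First, (iii) must be read with quantifiers ``for each fixed $t$, $P$-a.s.''; the literal order ``a.s., for every $t$'' would force a continuous nonconstant distribution function to be constant and is false, and your argument delivers exactly the correct reading (as does \cite{GT-AIF79}). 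Second, your identification of the limit law $P$ with the law of $\varphi_\lambda$ --- needed to convert the a.s. statement about $\varphi_\lambda(t)-\varphi_\lambda(s)$ into the asserted distributional limit of $X_n(t)-X_n(s)$ --- is only sketched, since $\lambda\mapsto\varphi_\lambda(t)$ is discontinuous where $t\in A_\lambda$; but this reduces to the same lemma $\mathbf{P}(t\in A_\lambda)=0$, so your whole proposal funnels through that single, provable point. Alternatively, the identification can be borrowed from the paper's own Propositions \ref{prop1} and \ref{prop2}, which is how the paper links its limit process to the arithmetic one before citing Tenenbaum.
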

   
   Claim (i) is just a special case of Theorem \ref{BM}. The properties (ii) and (iii) stem from the notable Tenenbaum's paper \cite{GT-AIF79} and Theorem \ref{Thm1}.
  
  The main needed lemmata are collected in the next section. The proof of Theorem~\ref{Thm1} is presented in Section \ref{sec3}. Namely, Proposition \ref{prop1} shows that the influence of short cycles is negligible, and Proposition \ref{prop2} establishes the convergence of marginal laws of finite order. Proposition \ref{prop3} verifies the tightness criteria for the sequence of measures $\{P_n\}_{n=1}^{\infty}$ 
  even in a stronger form than  needed for the application of Theorem 15.5 in the book~\cite{Bil}.

\section{Lemmata}
From  the general asymptotic theory on the mean values of multiplicative functions, nowadays having a vast literature, we will use the following result.
   
\begin{lem}  \label{Lem1} If $q\in\mcM_c$ satisfies $0\leq q_j(1)\leq 1$ for $j\leq n$, then
\[
    \E_n q\leq \big(\re^{\gamma}+O(n^{-1})\big)\exp\bigg\{\sum_{j\leq n}\frac{q_j(1)-1}{j}\bigg\}.
        \] 
    Here $\gamma$ denotes the Euler--Mascheroni constant.
            \end{lem}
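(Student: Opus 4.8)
The plan is to reduce the assertion to a single coefficient estimate and then to read off the constant $\re^\gamma$ from Mertens' formula. First I would use complete multiplicativity, writing $q(\sigma)=\prod_{j\le n}q_j(1)^{k_j(\sigma)}$ and setting $a_j:=q_j(1)\in[0,1]$. Averaging against the cycle-vector law recalled just before Theorem~\ref{BM} gives
\[
\E_n q=\sum_{\ell(\bar s)=n}\prod_{j\le n}\frac{a_j^{s_j}}{j^{s_j}\,s_j!}=[z^n]\exp\Big(\sum_{j\le n}\frac{a_j}{j}z^j\Big)=:M_n,
\]
the last equality being the exponential (cycle-index) formula. Differentiating the generating function also yields the convolution recursion $nM_n=\sum_{j\le n}a_jM_{n-j}$ with $M_0=1$, which is convenient for inductive estimates, and the nonnegativity $M_n\ge0$ coming from $a_j\ge0$.

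Next I would isolate the announced constant. Writing $L_n:=\exp\big(\sum_{j\le n}a_j/j\big)$ and invoking Mertens' estimate $\sum_{j\le n}1/j=\ln n+\gamma+O(n^{-1})$, one checks that
\[
\re^\gamma\exp\Big(\sum_{j\le n}\frac{a_j-1}{j}\Big)=\re^\gamma L_n\,\re^{-\sum_{j\le n}1/j}=\frac{1+O(n^{-1})}{n}\,L_n.
\]
Hence the lemma is equivalent to the clean coefficient bound $nM_n\le(1+O(n^{-1}))L_n$, i.e. $[z^n]\exp(G(z))\le n^{-1}(1+O(n^{-1}))\exp(G(1))$ with $G(z)=\sum_{j\le n}(a_j/j)z^j$. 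The point of this reduction is that the whole role of $\re^\gamma$ is played by Mertens' constant, so no sign or cancellation needs to be tracked at this stage.

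The core inequality I would attack by the saddle-point method applied to
\[
M_n=\frac{1}{2\pi i}\oint_{|z|=r}\frac{\exp(G(z))}{z^{n+1}}\,\rd z,\qquad G(z)=\sum_{j\le n}\frac{a_j}{j}z^j,
\]
choosing $r=r_n\in(0,1)$ to be (approximately) the saddle, the root of $\sum_{j\le n}a_jz^j=n$. Since $G$ has nonnegative coefficients, the modulus of the integrand on $|z|=r$ is maximised at $z=r$, and a Laplace expansion of $\Re G(re^{i\phi})$ near $\phi=0$ should produce the factor $n^{-1}$ together with the Gaussian normalisation governed by $B:=\sum_{j\le n}j\,a_j r^j$, while $\exp(G(r))\le\exp(G(1))=L_n$ supplies the main term.

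The hard part will be the uniformity. Because the $a_j$ are arbitrary in $[0,1]$ and carry no regularity in $j$, I must control $B$ and bound the contribution of $|\phi|$ bounded away from $0$ without smoothness of the coefficients, and I must verify that the constant emerging from the saddle analysis never beats what Mertens' formula permits, i.e. that $nM_n/L_n\le 1+O(n^{-1})$ holds uniformly over all admissible $(a_j)$. An alternative that avoids complex analysis is to run an induction on $n$ through $nM_n=\sum_{j\le n}a_jM_{n-j}$ against the comparison sequence $\re^\gamma\exp\big(\sum_{i\le n}(a_i-1)/i\big)$; there the obstacle shifts to making the per-step factors $\exp((1-a_i)/i)=1+O(1/i)$ telescope into a single $O(n^{-1})$ error. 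Either way, the delicate issue is the sharp $n^{-1}$ saving, since every crude device — Rankin's trick at $z=1$, or bounding $a_j\le1$ termwise in the recursion — loses exactly this factor and yields only $M_n\le \re\,L_n$ or $M_n\le1$.
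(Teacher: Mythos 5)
The first thing to note is that the paper does not prove Lemma~\ref{Lem1} at all: its ``proof'' is the citation \cite{EM-MoHe17}, where the inequality is a nontrivial theorem (the permutation analogue of the sharp Hall--Tenenbaum mean-value bound, with $\re^{\gamma}$ the optimal constant). Your write-up would therefore have to stand as a complete self-contained argument, and it does not: it is a correct reduction followed by an honestly acknowledged open core. The preliminary steps are right. The identity $\E_n q=[z^n]\exp(G(z))$ with $G(z)=\sum_{j\le n}(a_j/j)z^j$ follows from the cycle-vector law; the recursion $nM_n=\sum_{j\le n}a_jM_{n-j}$ is valid; and since $\sum_{j\le n}1/j=\log n+\gamma+O(n^{-1})$ (this is the harmonic-number expansion rather than Mertens' formula, a harmless slip), the lemma is indeed equivalent to the uniform bound $nM_n\le\big(1+O(n^{-1})\big)L_n$ with $L_n:=\re^{G(1)}$. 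The example $a_n=1$, $a_j=0$ for $j<n$, where $nM_n=1$ and $L_n=1+O(n^{-1})$, shows the constant $1$ here is sharp, confirming your own observation that no termwise or Rankin-type device can close the gap.

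The genuine gap is that this core inequality is never proved, and the one concrete plan you offer is miscalibrated. Since $0\le a_j\le1$, at $z=1$ one has $\sum_{j\le n}a_j\le n$, and $\sum_{j\le n}a_jr^j$ is increasing in $r$; hence any root of the saddle equation $\sum_{j\le n}a_jr^j=n$ satisfies $r\ge1$, not $r\in(0,1)$ as you assert, and for $r>1$ your intended majorisation $\exp(G(r))\le\exp(G(1))=L_n$ fails because $G$ has nonnegative coefficients and is increasing on $[0,\infty)$. Retreating to the circle $|z|=1$, the Laplace analysis yields at best a saving of order $B^{-1/2}$ with $B=\sum_{j\le n}ja_j$, and $B$ can be $O(1)$ (take $a_1=1$, all other $a_j=0$), so no uniform factor $n^{-1}$ can be extracted this way; one would need a case analysis on the size and location of the mass of $(a_j)$ together with minor-arc control, with no regularity in $j$ available --- exactly the uniformity problem you flag but do not overcome. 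The inductive route through the recursion hits the same wall, as you concede. In the arithmetical prototype the constant $\re^{\gamma}$ costs a genuine iteration argument (cf.\ \cite{GT-95}), and \cite{EM-MoHe17} is devoted precisely to the permutation version; so what is missing from your proposal is the heart of the lemma, not a routine verification.
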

            
    \begin{proof} See $\cite{EM-MoHe17}$. This paper also exposes a possibility to substitute $\re^{\gamma}$ by a smaller quantity.\end{proof}
    
If $1\leq r\leq m$ and $m\geq 2$  are arbitrary integers, then we can uniquely decompose    $\sigma\in\S_m$ into the  product $\sigma=\sigma'\sigma''$, where $\sigma'$ is the so-called $r$-\textit{friable} (smooth)  and  $\sigma''$ is $r$-\textit{free} divisor. Namely, $\sigma'$  comprises  all cycles in $\sigma$ whose lengths do not exceed $r$, while $\sigma''$ contains the remaining ones. Counting such divisors, we will reduce the task to enumerate the $r$-{friable} and the $r$-{free} permutations in $\S_r$ where $r\leq n$. Now we present a few known results on that.

As usual, let $\rho\colon [0,\infty[\to]0,1]$ denote the Dickman--de Bruijn function defined as $\rho(u)=1$ for $[0,1]$ and, for the rest of its range, by the delay differential equation $u\rho'(u)+\rho(u-1)=0$. Recall (see, e.g. Section 5.4 in  \cite{GT-95})  that $\rho(u)=u^{-u+o(u)}$ as $u\to\infty$.  

    \begin{lem} \label{Lem2} If $ 1\leq r\leq m$,   $m\geq 2$, and $u:=m/r$, then
    \[
                   \nu_m\big(\sigma\in\S_m:\;  \sigma\,\, \text{is}\,\, \text{r-friable}\big)=
                   \rho(u)\Big(1+ O\Big(\frac{u\log (u+1)}{r}\Big)\Big).
                   \]
    \end{lem}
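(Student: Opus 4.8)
The plan is to convert the friability probability into an exact recursion for permutations and then to compare that recursion, term by term, with the integral equation obeyed by $\rho$; the Dickman asymptotics will emerge as the continuous shadow of a discrete averaging, and the error term will measure the mismatch between a Riemann sum and its integral, accumulated over the full range of $u$.

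First I would set $\psi(m,r):=\nu_m(\sigma\in\S_m:\ \sigma\ \text{is}\ r\text{-friable})$ and record two descriptions of it. By the exponential formula, $\sum_{m\ge0}\psi(m,r)z^m=\exp\bigl(\sum_{j\le r}z^j/j\bigr)$. Conditioning instead on the length $\ell$ of the cycle carrying the point $1$ gives the exact recursion
\[
 m\,\psi(m,r)=\sum_{\ell=1}^{\min(m,r)}\psi(m-\ell,r),\qquad \psi(m,r)=1\ (0\le m\le r),
\]
because such a cycle is formed in $(m-1)!/(m-\ell)!$ ways and the remaining $m-\ell$ points must again carry an $r$-friable permutation. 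It is this recursion that I would use.

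Next I would recall that for $u\ge1$ the Dickman function satisfies $u\rho(u)=\int_{u-1}^{u}\rho(t)\,\rd t$; after the substitution $t=s/r$ with $u=m/r$ this reads $m\,\rho(m/r)=\int_{m-r}^{m}\rho(s/r)\,\rd s$, the exact continuous analogue of the recursion above. Writing $e_m:=\psi(m,r)-\rho(m/r)$, so that $e_m=0$ for $m\le r$, and subtracting, I obtain
\[
 m\,e_m=\Delta_m+\sum_{k=m-r}^{m-1}e_k,\qquad \Delta_m:=\sum_{k=m-r}^{m-1}\rho(k/r)-\int_{m-r}^{m}\rho(s/r)\,\rd s.
\]
Here $\Delta_m$ is the discrepancy of a left endpoint rule; by Euler--Maclaurin its leading part is $\tfrac12\bigl(\rho(u-1)-\rho(u)\bigr)$, the second order contribution being $\rO\bigl(r^{-1}\int_{u-1}^{u}|\rho''(t)|\,\rd t\bigr)$. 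Using $\rho'(u)=-\rho(u-1)/u$ together with the standard ratio estimate $\rho(u-1)/\rho(u)\ll u\log(u+1)$, one checks that the second order term is smaller by a factor $\ll (\log(u+1))/r$, so that $|\Delta_m|\ll\rho(u-1)\ll u\log(u+1)\,\rho(u)$.

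The crux is to propagate these per step discrepancies through the averaging operator; a bound that merely replaces $\sum_{k=m-r}^{m-1}e_k$ by $r$ times its largest term would lose a whole factor of $u$. The gain comes from the near cancellation $\sum_{k=m-r}^{m-1}e_k-r\,e_m$: inserting the ansatz $e_m=\rho(u)\,\epsilon(m/r)$ and replacing the sum by an integral turns the relation into $r\int_{u-1}^{u}\rho(t)\bigl(\epsilon(u)-\epsilon(t)\bigr)\,\rd t\approx\Delta_m$, and since $\rho$ falls across $[u-1,u]$ at rate $\asymp\log(u+1)$ the weight $\int_{u-1}^{u}\rho(t)(u-t)\,\rd t$ is of size $\asymp\rho(u-1)/\log(u+1)$. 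Hence $\epsilon'(u)\asymp\log(u+1)/r$, and integrating from $u=1$, where $\epsilon$ vanishes, yields $\epsilon(u)\ll u\log(u+1)/r$, which is exactly the claimed relative error. To make this rigorous I would run an induction on $m$ with a supersolution: I would verify that $\bar e_m:=C\,\rho(m/r)\,(m/r)\log(m/r+1)/r$ satisfies $m\bar e_m\ge|\Delta_m|+\sum_{k=m-r}^{m-1}\bar e_k$ for a suitable absolute constant $C$, whence $|e_m|\le\bar e_m$ follows. The main obstacle is precisely this verification: one must control the accumulation in the convolution while keeping $C$ uniform in $u$, and this needs the quantitative monotonicity and convexity of $\rho$ (through $\rho'(u)=-\rho(u-1)/u$ and the sharp ratio bound) to certify the cancellation at the heart of the estimate. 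An alternative is to attack $[z^m]\exp(\sum_{j\le r}z^j/j)=\frac{1}{1-z}\exp\bigl(-\sum_{j>r}z^j/j\bigr)$ by the saddle point method in the manner of Hildebrand and Tenenbaum; this reproduces the same asymptotics but makes the precise shape of the error term appreciably more laborious to extract.
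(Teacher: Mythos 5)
The paper does not actually prove this lemma: it is quoted from Gorodetsky (Proposition 1.8 of \cite{OGor-23}), extending the range $\sqrt{m\log(m+1)}\le r\le m$ treated by Manstavi\v{c}ius--Petuchovas \cite{EMRP-EJC16}, and both cited proofs go through saddle-point analysis of $[z^m]\exp\big(\sum_{j\le r}z^j/j\big)$. So any self-contained argument is necessarily a different route, and your skeleton is attractive: the recursion $m\,\psi(m,r)=\sum_{\ell\le\min(m,r)}\psi(m-\ell,r)$ is correct, the identity $m\,\rho(m/r)=\int_{m-r}^{m}\rho(s/r)\,\rd s$ is correct, and the bound $0\le\Delta_m\le\rho(u-1)-\rho(u)$ (hence also $e_m\ge 0$, i.e.\ $\psi(m,r)\ge\rho(u)$) follows cleanly from monotonicity. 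The problem is that you stop exactly at the step that carries all the difficulty, and that step, as you have formulated it, is not merely unverified but false. You propose to check that $\bar e_m:=C\rho(m/r)(m/r)\log(m/r+1)/r$ satisfies $m\bar e_m\ge|\Delta_m|+\sum_{k=m-r}^{m-1}\bar e_k$ for an absolute $C$, uniformly for $1\le r\le m$. Take $r=1$: then $\psi(m,1)=1/m!$, while $\rho(m)=\exp\{-m\log m-m\log\log m+m+\ro(m)\}$, so $e_m=\psi(m,1)-\rho(m)$ exceeds $\rho(m)\,m\log m$ by a factor $\exp\{(1+\ro(1))m\log\log m\}$; no choice of $C$ can make $|e_m|\le\bar e_m$, so the induction cannot close. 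One sees the failure directly in the recursion: at $r=1$ the inequality requires $m\bar e_m\ge\bar e_{m-1}$, but $\rho(m-1)/\rho(m)\asymp m\log m$ makes $\bar e_{m-1}$ larger than $m\bar e_m$ by a factor $\asymp\log m$.

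The structural reason is the one you gesture at but do not quantify. Replacing $\sum_{k=m-r}^{m-1}\bar e_k$ by $\int_{m-r}^{m}\bar e(s)\,\rd s$ costs an overshoot of order $\bar e_{m-r}-\bar e_m\asymp C\rho(u-1)\,u\log(u+1)/r$ (left-endpoint sum of a decreasing function \emph{exceeds} its integral), whereas the cancellation margin you are counting on, $C\int_{u-1}^{u}\big[u\log(u+1)-t\log(t+1)\big]\rho(t)\,\rd t$, is only $\asymp C\rho(u-1)\log(u+1)/\xi(u)\asymp C\rho(u-1)$, since $e^{\xi(u)}\asymp u\log u$ makes $\int_{u-1}^{u}(u-t)\rho(t)\,\rd t\asymp\rho(u-1)/\xi(u)$. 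So the overshoot swamps the margin as soon as $u\log(u+1)\gtrsim r$, which is exactly where the stated lemma stops carrying asymptotic content; your scheme can plausibly be completed only in the regime $u\log(u+1)\le\delta r$, and the lemma's claim of the full range $1\le r\le m$ is precisely what Gorodetsky's saddle-point machinery is for. Note also that even in the restricted regime, the constants in your margin computation rest on two-sided ratio estimates of the form $\rho(u-s)/\rho(u)=e^{s\xi(u)(1+\ro(1))}$ uniformly in $u$, which is not a soft consequence of $\rho'(u)=-\rho(u-1)/u$ but essentially the quantitative input of \cite{EMRP-EJC16}; invoking it as ``standard'' while keeping $C$ uniform is circular in spirit. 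In short: correct set-up, correct first estimates, but the decisive supersolution verification is missing, is false in the range you claim, and in the range where it could be true it requires the sharp $\rho$-asymptotics your elementary route was meant to avoid.
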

      
      \begin{proof} This is Proposition 1.8 in \cite{OGor-23} extending the asymptotic formula valid for $ \sqrt{m\log (m+1)}\leq r\leq m$ established in  the paper \cite{EMRP-EJC16} written jointly with R. Petuchovas.
     \end{proof} 
    
    Let $\omega\colon [0,\infty[\to]0,1]$ denote the Buchstab's function defined as $\omega(u)=1/u$ for $[1,2]$ and by the delay differential equation $(u\omega(u))'=\omega(u-1)$ for $u\geq 2$. Theorem 4 on page 402 in \cite{GT-95} gives that $\omega(u)-\re^{-\gamma}\ll \rho(u)\log^{-1}(u+1)$ if $u\geq 1$.  

    \begin{lem} \label{Lem3} If $ 1\leq r\leq m$  and $u:=m/r\geq 1$, then
    \[
                   \nu_m\big(\sigma\in\S_m:\;  \sigma\,\, \text{is}\,\, \text{r-free}\big)=
                   \exp\Big\{-\sum_{j\leq r}\frac1j\Big\}\Big(\re^\gamma\omega(u)+O(r^{-1})\Big).
                   \]
    \end{lem}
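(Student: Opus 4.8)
The plan is to determine $a_m:=\nu_m\big(\sigma\in\S_m:\ \sigma\ \text{is}\ r\text{-free}\big)$ through its generating function and a resulting renewal-type recurrence, and then to match that recurrence against the integral form of Buchstab's equation. Since an $r$-free permutation is precisely one all of whose cycles have length exceeding $r$, the exponential formula gives $\sum_{m\ge0}a_m z^m=\frac1{1-z}\exp\big(-\sum_{j\le r}z^j/j\big)$: multiplying the all-permutations generating series $\exp\big(\sum_{j\ge1}z^j/j\big)=(1-z)^{-1}$ by $\exp\big(-\sum_{j\le r}z^j/j\big)$ deletes every cycle of length at most $r$. I would first record the two boundary facts this forces, namely $a_0=1$ and $a_m=0$ for $1\le m\le r$ (a permutation on at most $r$ points must contain a short cycle).

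Next I would extract the recurrence by marking one point: the cycle through a fixed element of an $r$-free $\sigma\in\S_m$ has some length $j>r$, and choosing and cyclically ordering its other $j-1$ points while leaving an $r$-free permutation on the remaining $m-j$ points yields $m\,a_m=\sum_{k=0}^{m-r-1}a_k$, equivalently $m\,a_m-(m-1)a_{m-1}=a_{m-r-1}$. This is the discrete analogue of Buchstab's relation. I would then introduce the normalisation $A(u):=\exp\big(\sum_{j\le r}1/j\big)\,a_m$ at $u=m/r$, so that the assertion becomes $A(u)=\re^{\gamma}\omega(u)+O(r^{-1})$; the expansion $\sum_{j\le r}1/j=\log r+\gamma+O(r^{-1})$ turns the prefactor into $(\re^{-\gamma}/r)(1+O(r^{-1}))$, which is exactly the form stated in the lemma.

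I would establish the estimate by induction over the unit intervals $u\in(k,k+1]$. In the base interval $r<m\le2r$ only the single $m$-cycle is $r$-free, so $a_m=1/m$ and $A(u)=\exp\big(\sum_{j\le r}1/j\big)/m=(\re^{\gamma}/u)(1+O(r^{-1}))=\re^{\gamma}\omega(u)+O(r^{-1})$, since $\omega(u)=1/u$ there. For the inductive step I would divide the recurrence by $m$, isolate the boundary term $a_0=1$ (which, after normalisation, supplies precisely the constant $1$ in Buchstab's integral equation $u\omega(u)=1+\int_1^{u-1}\omega(s)\,\rd s$), and approximate $\frac1r\sum_{r<k<m-r}A(k/r)$ by $\int_1^{u-1}A(s)\,\rd s$. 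Because $\omega$ has bounded total variation on $[1,\infty)$, the Riemann-sum error is $O(1)$ and contributes only $O(r^{-1})$ after division by $m\asymp ru$; inserting the inductive estimate for $A$ and invoking Buchstab's equation then reproduces $A(u)=\re^{\gamma}\omega(u)+O(r^{-1})$ on the next interval.

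The main obstacle is keeping the implied constant in $O(r^{-1})$ \emph{uniform} in $u$, i.e. preventing the per-interval errors from accumulating as $u\to\infty$. Writing $E(u):=A(u)-\re^{\gamma}\omega(u)$, the recurrence yields an integral relation of the shape $u\,E(u)=\int_1^{u-1}E(s)\,\rd s+O(r^{-1})$, whose naive iteration only gives growth linear in the number of levels (hence useless once $u$ is comparable to $r$). The saving comes from the rapid decay $\omega(u)-\re^{-\gamma}\ll\rho(u)\log^{-1}(u+1)$ recalled before the lemma, which forces $A$, like $\omega$, to settle near $\re^{-\gamma}$, so that the averaging operator $u^{-1}\int_1^{u-1}$ is genuinely contracting in a suitably weighted supremum norm. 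This is exactly the uniform estimate carried out, in the number-theoretic language of integers free of prime factors at most $y$, in the literature accompanying Lemma~\ref{Lem2}; its transcription to the symmetric group replaces the counting function by $a_m$ and $\log y$ by $\sum_{j\le r}1/j$. As a consistency check, letting $u\to\infty$ for fixed $r$ leaves only the simple pole of the generating function at $z=1$, whose contribution to $[z^m]$ is $\exp\big(-\sum_{j\le r}1/j\big)$; this agrees with the formula, since $\re^{\gamma}\omega(\infty)=1$.
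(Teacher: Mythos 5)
The paper offers no argument for this lemma at all: it simply cites Theorem~3 of \cite{EM-LMR02} (with \cite{RP-Aus18} and \cite{Ford} as surveys). Your self-contained renewal argument is therefore a genuinely different route, and its skeleton is correct: the exponential formula $\sum_m a_m z^m=(1-z)^{-1}\exp\{-\sum_{j\le r}z^j/j\}$, the boundary values $a_0=1$ and $a_m=0$ for $1\le m\le r$, the marked-point recurrence $m\,a_m=\sum_{0\le k\le m-r-1}a_k$, the identification with Buchstab's integral equation $u\omega(u)=1+\int_1^{u-1}\omega(s)\,\rd s$, and the base interval $r<m\le 2r$, where $a_m=1/m$, are all sound. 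Your route is elementary and transparent where the paper is opaque, and it even exposes a boundary blemish in the statement: your own fact $a_r=0$ contradicts the asserted main term $\asymp 1/r$ at $u=1$, so the formula must be read for $m>r$ (harmless for the paper's application in Proposition~1, where only an upper bound is used). One small slip: since $A\approx\re^{\gamma}\omega$ and $\omega(u)\to\re^{-\gamma}$, it is $\omega$ that settles near $\re^{-\gamma}$ while $A$ settles near $1$ --- as your own closing consistency check correctly states.

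There is, however, one substantive soft spot: the uniformity of $O(r^{-1})$ in $u$ is precisely where you stop proving and start citing, and the pointer is mis-aimed --- the literature behind Lemma~2 (Gorodetsky \cite{OGor-23}, Manstavi\v{c}ius--Petuchovas \cite{EMRP-EJC16}) concerns the friable count and $\rho$, not the free count and $\omega$; the free-case uniformity is exactly the content of \cite{EM-LMR02}, so as written your argument is no more self-contained at the crux than the paper's citation. Fortunately, no weighted-norm contraction is needed: run the induction discretely. Writing $H_r:=\sum_{j\le r}1/j$, $A(k/r):=\re^{H_r}a_k$ and $E:=A-\re^{\gamma}\omega$, the recurrence gives $mA(m/r)=\re^{H_r}+\sum_{r<k\le m-r-1}A(k/r)$, while discretising Buchstab's equation yields $ru\,\re^{\gamma}\omega(u)=r\re^{\gamma}+\sum_{r<k\le m-r-1}\re^{\gamma}\omega(k/r)+O(1)$, the $O(1)$ being uniform in $u$ because $\omega$ has \emph{finite} total variation on $[1,\infty)$ (by the decay $\omega(u)-\re^{-\gamma}\ll\rho(u)\log^{-1}(u+1)$ recalled before the lemma). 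Subtracting, and using $\re^{H_r}=r\re^{\gamma}+O(1)$, one gets
\begin{equation*}
  m\,\big|E(m/r)\big|\;\le\;\sum_{r<k\le m-r-1}\big|E(k/r)\big|+C .
\end{equation*}
If $|E(k/r)|\le c/r$ for all $k<m$ (true on the base interval), the sum has at most $m-2r-1$ terms, so $m|E(m/r)|\le mc/r-2c+C\le mc/r$ once $c\ge C/2$: the $2r+1$ missing summands supply exactly the slack that prevents the error accumulation you worried about, uniformly in $u$, with no appeal to the contraction heuristics or to transcription from the integer setting. With this closure your proposal becomes a complete elementary proof; without it, it remains a (well-structured) sketch resting on the same external sources as the paper's one-line proof.
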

      
      \begin{proof} This is  Theorem 3 from the author's paper \cite{EM-LMR02}. See \cite{RP-Aus18} and \cite{Ford} for a state-of-the-art survey on enumeration of the $r$-free permutations. 
         \end{proof}

\section{Proof of Theorem \ref{Thm1}}\label{sec3}

We split the \textit{proof}  into three parts.

\subsection{Long cycles are essential} 
 
 Let us  discover the role of the divisors having long cycles.
If  $0<\e<1$, we let
\[ 
\sigma'(\e)=\prod_{\k\vert\sigma\atop |\k|\leq \e n} \k, \qquad \sigma(\e)=\prod_{\k\vert\sigma\atop\e n<|\k|\leq n}\k 
\]  
denote the $(\e n)$-friable  and  $(\e n)$-free divisors, respectively. Introduce the process
\[
     X_n(\sigma(\e), t)=\frac1{f(\sigma(\e))}\sum_{\delta\vert\sigma(\e)\atop |\delta|\leq tn} g(\delta), \quad 0\leq t\leq 1.
     \]
For brevity, let $\alpha=(\log(1/\e))^{1/2}$.
  
\begin{prop} \label{prop1} There exist absolute positive constants $c_0$, $\e_0$ and $C$ such that
\[
\nu_n\big(X_n(\sigma, t)\not=X_n(\sigma(\e), t)\big)\leq C \alpha^{-1}
\]
  uniformly in $\e^c\leq t\leq 1-\e^c$, provided that $0<c<c_0$, $0<\e<\e_0$ and $n\geq n_0(\e)$. 
\end{prop}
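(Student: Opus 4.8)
The plan is to make the event deterministic in $\sigma$ and then reduce it to a subset-sum statement about the long cycles. Since $g\in\mcM_c$ with $g_j(1)=\vartheta$, one has $g(\delta)=\vartheta^{w(\delta)}$ and $f(\sigma)=(1+\vartheta)^{w(\sigma)}$, so choosing a divisor of $\sigma$ with weight proportional to $g(\delta)$ is the same as keeping each cycle independently with probability $\theta:=\vartheta/(1+\vartheta)$. Writing $\mathbf P$ for this selection and splitting a kept divisor into its short part (cycles of length $\le\e n$) and long part $\delta_L$ (cycles of length $>\e n$), we get $X_n(\sigma,t)=\mathbf P(|\delta|\le tn)$ and $X_n(\sigma(\e),t)=\mathbf P(|\delta_L|\le tn)$. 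With $S:=|\sigma'(\e)|$ the total length of the short cycles, their difference is $\mathbf P(|\delta_L|\le tn<|\delta|)\ge0$, and since the retained short mass lies in $[0,S]$ this is positive exactly when some sub-product of $\sigma(\e)$ has size in the window $(tn-S,tn]$. Thus
\[
\nu_n\big(X_n(\sigma,t)\ne X_n(\sigma(\e),t)\big)=\nu_n\big(\exists\,\delta_L\mid\sigma(\e):\ |\delta_L|\in(tn-S,tn]\big).
\]

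First I would dispose of large $S$. Because $\E_n S=\sum_{j\le\e n}j\cdot\tfrac1j=\lfloor\e n\rfloor\le\e n$, Markov's inequality gives $\nu_n(S>\alpha\e n)\le\alpha^{-1}$; this is precisely the term producing the asserted rate. It then remains to bound the probability that some union of long cycles has size in the fixed-width window $(tn-\alpha\e n,tn]$.

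Here the main difficulty appears, and the naive first moment is \emph{not} enough. Decomposing $\sigma$ into two long sub-products $\delta_L,\sigma(\e)/\delta_L$ and a friable remainder and invoking Lemmas \ref{Lem2}--\ref{Lem3}, one obtains the clean identity
\[
\E_n\#\{\delta_L:\ |\delta_L|\in(tn-S,tn]\}=\sum_{m_1\le tn}\ \sum_{m_2<(1-t)n}\nu_{m_1}(\text{$\e n$-free})\,\nu_{m_2}(\text{$\e n$-free})\,\rho\!\Big(\tfrac{n-m_1-m_2}{\e n}\Big),
\]
which, using $\nu_m(\text{$\e n$-free})\sim e^{-\gamma}/(\e n)$ for $m\asymp n$ and the finiteness of $\int_0^\infty u\rho(u)\,\rd u$, is only of order $1$. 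The inflation comes from atypical permutations carrying far more than the typical number $\sum_{\e n<j\le n}1/j\sim\log(1/\e)=\alpha^2$ of long cycles. I would therefore condition on the count $w_L$ of long cycles: it is asymptotically Poisson with mean $\alpha^2$ (by the independence of cycle counts, cf. \cite{ABT}), so a Chernoff bound shows $\nu_n(w_L>K)\ll\alpha^{-1}$ for a suitable $K=\alpha^2(1+\ro(1))$. On $\{w_L\le K\}$ there are at most $2^{K}$ sub-products, and an anti-concentration (local-limit) bound — that a prescribed union of long cycles has size in a fixed window of width $\alpha\e n$ with probability $\rO(\alpha\e)$ — yields
\[
\E_n\big[\#\{\delta_L:\ |\delta_L|\in(tn-\alpha\e n,tn]\}\,;\,w_L\le K\big]\le \rO(\alpha\e)\cdot 2^{K}=\rO\big(\alpha\,\e^{\,1-\log2+\ro(1)}\big)\to0 .
\]

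The hard step is this local-limit estimate at the scale of a single summand: every long cycle has length just above $\e n$, only a bounded factor below the window width, so one is asking for a local limit theorem for sums of cycle lengths at essentially the lattice scale. I would derive it from the absolute continuity and bounded densities of the largest normalized cycle lengths — equivalently, the Poisson--Dirichlet limit reached through the Feller coupling of \cite{ABT} — the restriction $\e^c\le t\le1-\e^c$ serving to keep $t$ away from the endpoints $0,1$ where these densities degenerate; the upper endpoint is reduced to the lower one by the size-reversing involution $\delta_L\mapsto\sigma(\e)/\delta_L$. Collecting the three contributions $\alpha^{-1}+\ro(\alpha^{-1})+\ro(1)$ gives the bound $C\alpha^{-1}$.
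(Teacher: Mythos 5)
Your opening reductions are sound and in fact coincide with the paper's own first steps: the identity $X_n(\sigma,t)-X_n(\sigma(\e),t)\ne 0$ if and only if some divisor of $\sigma(\e)$ has size in the window $(tn-S,tn]$ with $S=|\sigma'(\e)|$, and the Markov bound $\nu_n(S>\e\alpha n)\le\alpha^{-1}$, are exactly how the paper begins, and your diagnosis that a naive first-moment count of window hits is of order $1$ rather than $\ro(1)$ is also correct. The genuine gap is the step you yourself call ``the hard step'': the union bound over the at most $2^K$ sub-products of long cycles, $K=(1+\ro(1))\alpha^2$, combined with a per-subset anti-concentration estimate at scale $\e\alpha n$. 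First, as literally stated the uniform bound $\rO(\alpha\e)$ is false: for a singleton subset, the probability that one long cycle has length within a window of width $\e\alpha n$ around $tn$ is $\asymp \e\alpha/t$, i.e.\ $\asymp\alpha\e^{1-c}$ at the endpoint $t=\e^c$ (this $\e^{-c}$ loss happens to be absorbable for $c<1-\log 2$, but it already shows the claimed constant cannot be absolute). Second, and more seriously, nothing you cite proves the needed local estimate: the subsets have size up to $K\approx\log(1/\e)$, the cycle lengths are dependent (they share the budget $n$) and live at scale $\e n$, only a factor $\alpha$ below the window width; the Poisson--Dirichlet limit reached through the Feller coupling is a weak limit controlling a bounded number of the largest normalized coordinates, and it cannot by itself deliver density bounds for all $2^m$ subset sums, uniformly in $m\le K$ and in $\e$, at a scale shrinking with $\e$ --- while the proposition requires absolute constants $c_0$, $\e_0$, $C$. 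So the final estimate $\rO(\alpha\e)\cdot 2^K\to0$ rests on an unproven lemma that is essentially as hard as the proposition itself.

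It is instructive that the paper avoids both the $2^K$ union bound and any local limit theorem. Writing $\sigma(\e)=\delta_2\delta_2'$ with $|\delta_2|$ in the window, it first notes via the Tur\'an--Kubilius second-moment inequality (\ref{KM}) that outside a set of measure $\ll\alpha^{-2}$ one has $w(\sigma(\e))\le(3/2)\alpha^2$, so one of $\delta_2,\delta_2'$ has at most $(3/4)\alpha^2$ cycles while its size is at least $\e^c n/2$. The key inversion is then to bound not the number of divisors but the probability of this atypical configuration: a free divisor of size $k\ge \e^c n/2$ typically carries about $h(\e n,k)\ge(1-c)\alpha^2-\log 2$ cycles, so having at most $(3/4)\alpha^2$ of them is an event of probability $\ll\alpha^{-2}$, again by Chebyshev via (\ref{KM}) applied within $\S_k$ --- this is precisely where the restriction $t\in[\e^c,1-\e^c]$ with $c<c_0=1/24$ enters. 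Counting the decompositions $\sigma=\delta\tau(\e)$ with the friable/free enumeration Lemmas \ref{Lem2} and \ref{Lem3} then gives $\mu_n\ll(\e n)^{-1}\cdot\alpha^{-2}\cdot\e\alpha n=\alpha^{-1}$. To salvage your route you would have to prove your uniform subset-sum local estimate from scratch; the paper's second-moment trick replaces it entirely by an elementary variance computation.
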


\begin{proof} We adopt the original arguments used in the number-theoretical paper \cite{EMNMT-97}. We may start with $\e_0\leq \re^{-1}$ and $ n_0(\e)> \e^{-1}\geq\re$ and refine the choice in the proof process. 
 
Each  $\delta\vert \sigma'(\e)\sigma(\e)$ splits into a product of two divisors $\delta=\delta_1\delta_2$ such that
$\delta_1\vert \sigma'(\e)$ and $\delta_2\vert \sigma(\e)$. Hence
  \begin{align*}
X_n(\sigma, t)&= \frac1{f(\sigma'(\e))}\sum_{\delta_1\vert\sigma'(\e)} g(\delta_1) \cdot \frac1{f(\sigma(\e))}
\sum_{\delta_2\vert \sigma(\e)\atop |\delta_2|\leq nt}g(\delta_2)\\
&\quad -
\frac1{f(\sigma)}\sum_{\delta_1\vert\sigma'(\e)} g(\delta_1)\sum_{\delta_2\vert \sigma(\e)\atop  tn -|\delta_1|<|\delta_2|\leq nt} g(\delta_2)\\
&=:
X_n(\sigma(\e), t)- Y_n(\sigma, t).
\end{align*}
  
  The largest $(\e n)$-friable divisor of $\sigma$ is $\sigma'(\e)$.
Observe that the subset of $\sigma$ having comparatively large $\sigma'(\e)$ is sparse. Indeed,
\begin{align*}
     \nu_n\big(\sigma:\; |\sigma'(\e)|>\e\alpha n\big)&=
      \nu_n\bigg(\sum_{j\leq \e n} j k_j(\sigma)>\e \alpha n\bigg)\leq 
      \frac{1}{\e \alpha n}\sum_{j\leq \e n} j \E_n k_j(\sigma)\\
      &=
            \frac{1}{\e \alpha n} \lfloor{\e} n\rfloor\leq \frac1{\alpha}.
\end{align*}
Hence
\begin{equation}\label{nu}\begin{split}
\nu_n\big(Y_n(\sigma,t)\not=0\big)
&\leq \alpha^{-1} +
\nu_n\bigg(\sigma:\; |\sigma'(\e)|\leq \e\alpha n, \, \exists \delta_2\vert\sigma(\e),\, 
tn -|\sigma'(\e)|< |\delta_2|\leq tn\bigg)\\
&=:\alpha^{-1} +\mu_n(t).
\end{split}\end{equation}  

Let us focus on the $\sigma$'s counted in $\mu_n(t)$. If $\sigma(\e)=\delta_2 \delta_2'$, then solving the inequalities between the parentheses in $\mu_n(t)$,  by virtue of 
\[
   |\delta_2|+|\delta'_2|=n-|\sigma'(\e)|,
   \]
 we have 
 \begin{equation}
(t-\e\alpha)n \leq |\delta_2|\leq tn, \qquad (1-t-\e\alpha)n \leq |\delta'_2|\leq (1-t)n
\label{neld}
\end{equation}
provided that $\e\alpha\leq t\leq 1-\e\alpha$.
Now, it is essential to verify that at least one of such $\delta_2$ or $\delta'_2$ has a comparatively small number of cycles. By the definition of the number-of-cycles function, 
\[
    w(\sigma(\e))=\sum_{\e n<j\leq n}  k_j(\sigma).
\]
The effective inequality for the second moment of an arbitrary  completely additive function has been established by J. Klimavi\v{c}ius and the author in \cite{JKEM-AnBud18}. For the particular case, we have
\begin{equation}
  \frac1{n!}\sum_{\sigma\in\S_n} \big(w(\sigma(\e))- h(\e n, n) \big)^2= h(\e n, n)-
    \sum_{\e n< i, j\leq n\atop i+j>n} \frac1{ij}\leq  h(\e n, n), \quad n\geq 2,
  \label{KM}
  \end{equation}
  where
  $
  h(y,x):=\sum_{y<j\leq x}1/j$ satisfying the inequality $\big|  h(y,x)-\log(x/y)\big|\leq 1/y$.
  Applying Chebyshev's inequality and (\ref{KM}), we obtain 
\begin{equation}
\nu_n \big(w(\sigma(\e))>(3/2) \alpha^2\big)\leq 
\nu_n \big(w(\sigma(\e))-h(\e n, n)> \alpha^2/4\big)\leq 32 \alpha^{-2}
   \label{KM1}
  \end{equation}
if $n \geq \e^{-1}\alpha^{-2}$. Thus, for all but $O( n! \alpha^{-2})$ permutations $\sigma\in\S_n$,  one of the above $\delta_2$ or $\delta'_2$ has  no more than $(3/4)\alpha^2$ cycles. 
In either  case, for the most of $\sigma$ counted in the frequency $\mu_n(t)$, we obtain a decomposition $\sigma=\delta \tau(\e)$, where $\tau(\e)$ is  $(\e n)$-free and, with $\delta(\e)=\delta_2$ or $\delta(\e)=\delta'_2$, the divisor $\delta:=\delta(\e)\sigma'(\e)$ belongs to the set
\[
     \Delta_t:=\big\{\delta\vert\sigma:\;   w(\delta(\e))\leq (3/4)\alpha^2, \, (t-\e\alpha)n\leq |\delta|\leq (t+\e\alpha)n \big\}
    \]
for  $ t$ (or $(1-t)$) from $[\e\alpha,\, 1-\e\alpha]$, as indicated in (\ref{neld}). In the shorter interval for  $t$, we have to establish the uniform estimate. Therefore, we proceed with
\begin{equation}
  \mu_n:= \max_{\e^c\leq t\leq 1-\e^c}\mu_n(t)\ll \alpha^{-2}+\max_{\e^c\leq t\leq 1-\e^c}\frac1{n!}\sum_{\sigma=\delta\tau(\e)\in\S_n\atop\delta\in \Delta_t}  1.
   \label{free}
   \end{equation}
 where $0<c<1/2$.
   
    As described in \cite{GBEM-24} or \cite{FlSed}, Chapter II, the sum over decompositions in (\ref{free})  can be reduced to summation over permutations belonging to respective symmetric groups  of lower order. For that, the vertex labels in $\tau(\e)$   and simultaneously the labels in $\delta$ belonging to  $\N_{n}$ can be substituted by the numbers from $\N_{|\tau(\e)|}$  and $\N_{|\delta|}$ so that the former orders of the labels in either of the divisors is  preserved. If $|\delta|=k$, then exactly  $\binom{n}{k}$ of the  pairs $(\delta, \tau(\e))$ are reduced to one pair $(\delta, \tau(\e))$, with $\delta\in\S_k$ and $\tau(\e)\in\S_{n-k}$. Here we are leaving the same notation after the relabelling since the reduction does not change the cycle structure of divisors; in particular, neither the sizes and nor $w(\delta(\e))$. Consequently, (\ref{free}) attains the form
 \begin{equation}
   \mu_n \ll \alpha^{-2}+\max_{\e^c\leq t\leq 1-\e^c}\frac1{n!}\sum_{(t-\e\alpha)n\leq k\leq (t+\e\alpha)n}\binom{n}{k}
   \sum_{\delta\in\S_k\atop w(\delta(\e))\leq (3/4)\alpha^2} \sum_{\tau(\e)\in\S_{n-k}}  1.
   \label{free1}
  \end{equation}  
   The  innermost sum counts the $(\e n)$-free permutations in the symmetric group $\S_{n-k}$.
    Since 
    \[
    (n-k)/(\e n)\geq  (1-t-\e\alpha)/\e\geq \e^{c-1}/2\geq 1,
    \]
    for $0<c\leq 1/2$ assuring $\e\alpha\leq \e^{c}/2$ if $\e\leq \re^{-2}$,
    we can apply Lemma \ref{Lem3} to get
    \[
     \frac1{(n-k)!} \sum_{\tau(\e)\in\S_{n-k}}  1 \ll \frac1{\e n}.
     \]
   Inserting this into estimate (\ref{free1}), we obtain
\begin{equation}
   \mu_n \ll \alpha^{-2}
 +
   \max_{\e^c\leq t\leq 1-\e^c}\frac1{\e n}\sum_{(t-\e\alpha)n\leq k\leq (t+\e\alpha)n} \nu_k \bigg(\delta\in\S_k: \; w(\delta(\e))\leq (3/4)\alpha^2\bigg).
   \label{free2}
   \end{equation}
The summation is over large $k$, namely,   $k\geq  (t-\e\alpha)n\geq \e^c  n/2$. Therefore, we can again use inequality (\ref{KM}) with $k$ instead of $n$, centralizing $w(\delta(\e))$ by $h(\e n, k)$.
The frequency under the sum does not exceed
\[
\nu_k:=\nu_k \big(\delta\in\S_k: \; w(\delta(\e))-h(\e n, k)\leq -(1/4)\alpha^2 +R\big),
\]
where
\[
R=\alpha^2 -\log\frac{k}{\e n} +\frac1{\e n}=\log\frac{n}{k}+\frac1{\e n}\leq  c\alpha^2+\log 2+\frac1{\e n}.
\]
If $c<1/24=:c_0$, $\e<2^{-24}=:\e_0$, and $n\geq n_0(\e):= 24\e^{-1} \alpha^{-2}$, then $R\leq
 \alpha^2/8$. Hence
\[
 \nu_k\leq \nu_k \big(\delta\in\S_k: \; w(\delta(\e))-h(\e n, k)\leq -(1/8)\alpha^2 \big)\ll
     \alpha^{-4} h(\e n,k)\ll     \alpha^{-2}.
     \] 
    Combining the last estimate with (\ref{free2}), we obtain
\[
\mu_n\ll \alpha^{-2}+(\e n)^{-1}\cdot  \alpha^{-2}\cdot \e \alpha n\ll\alpha^{-1}.
\]
    
    Recalling inequalities (\ref{free}) and (\ref{nu}), we complete the proof of Proposition \ref{prop1}.
 \end{proof}

\subsection{Convergence of finite-dimensional distributions} 

This step is devoted to the finite-dimensional distributions of the process $X_n(t)$.
   Let $l\in\N$ and   $0\leq t_1<\cdots<t_l\leq 1$ be arbitrary fixed numbers. Define the vectors  $T:=(t_1,\dots,t_l)$,  $U:=(u_1,\dots,u_l)\in[0,1]^l$, and the distribution function
   \[
        F_n(U, T):=\nu_n\big( X_n(t_1)\leq u_1, \, \dots, X_n(t_l)\leq u_l\big).
        \]
   
      \begin{prop} \label{prop2} For all vectors $T$ and $U$, the distribution function
   $    F_n(U, T)$ converges as $n\to\infty$ to an $l$-dimensional distribution function.
   \end{prop}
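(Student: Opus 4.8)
The plan is to reduce, via Proposition \ref{prop1}, to the process built solely from long cycles, where the hypothesis $g\in\mcM_c$ yields an explicit functional representation, and then to invoke the classical convergence of normalised cycle lengths to the Poisson--Dirichlet law. First I would fix $\e\in(0,\e_0)$ and record that, since $g_j(1)=\vartheta$, complete multiplicativity gives $g(\delta)=\vartheta^{w(\delta)}$ and hence $f(\sigma(\e))=(1+\vartheta)^{w(\sigma(\e))}$. Writing $W:=w(\sigma(\e))$ for the number of cycles of $\sigma$ longer than $\e n$ and $\ell_1,\dots,\ell_W$ for their normalised sizes $|\k_i|/n$, the truncated process attains the closed form
\[
   X_n(\sigma(\e), t)=(1+\vartheta)^{-W}\sum_{S\subseteq\{1,\dots,W\}\atop \sum_{i\in S}\ell_i\leq t}\vartheta^{|S|}=:\Phi_t(\ell_1,\dots,\ell_W),
\]
a function only of the normalised long-cycle sizes.

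Next I would use the fact (see \cite{ABT}) that the ranked normalised cycle lengths of a $\nu_n$-random permutation converge in distribution to the Poisson--Dirichlet law $PD(1)$; restricting to parts exceeding $\e$ yields joint convergence of the finite vector $(\ell_1,\dots,\ell_W)$ to the corresponding vector of $PD(1)$ parts exceeding $\e$. Since the limiting law is continuous, almost surely no part equals $\e$ and, for each fixed $t_i$, no partial sum $\sum_{i\in S}\ell_i$ equals $t_i$; hence $\Phi_{t_i}$ is almost surely a point of continuity of the relevant functional. By the continuous mapping theorem the vector $\big(X_n(\sigma(\e),t_1),\dots,X_n(\sigma(\e),t_l)\big)$ converges in distribution, so its distribution function converges at every continuity point to some limit $F^{(\e)}(U,T)$.

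Finally I would remove the truncation by letting $\e\to0$. For each interior coordinate $t_i\in(0,1)$ and all sufficiently small $\e$ we have $\e^c\leq t_i\leq 1-\e^c$, so Proposition \ref{prop1} gives
\[
   \big|F_n(U,T)-F_n^{(\e)}(U,T)\big|\leq\sum_{i\leq l}\nu_n\big(X_n(t_i)\neq X_n(\sigma(\e),t_i)\big)\leq lC\alpha^{-1},
\]
where $F_n^{(\e)}$ is the distribution function of the truncated vector. Combining this uniform bound with the convergence $F_n^{(\e)}\to F^{(\e)}$ of the previous step shows that the limits $F^{(\e)}$ form a Cauchy family as $\e\to0$; their common limit $F(U,T)$ is a genuine $l$-dimensional distribution function and is the sought limit of $F_n(U,T)$. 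The boundary cases $t_i\in\{0,1\}$ are degenerate, since $X_n(1)\equiv1$ and $X_n(0)=(1+\vartheta)^{-w(\sigma)}\to0$ in probability, and are incorporated directly.

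The main obstacle I expect is the interchange of the limits \n\ and $\e\to0$. This is exactly where the uniform-in-$t$ estimate of Proposition \ref{prop1} is indispensable: it controls the truncation error simultaneously for all coordinates $t_1,\dots,t_l$ without degradation as $n$ grows, so the Cauchy argument closes. A secondary technical point is verifying that the continuity-point set of $\Phi_t$ carries full mass under $PD(1)$, which rests on the absolute continuity of the finitely many relevant partial sums.
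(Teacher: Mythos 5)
Your proposal is correct in substance but takes a genuinely different route from the paper. The two arguments share the same starting point: truncation to the $(\e n)$-free part via Proposition \ref{prop1}, and the observation (which the paper also makes explicitly) that complete multiplicativity gives $f(\tau)=(1+\vartheta)^{m}$ with divisors enumerated by indicator vectors, so that $\overline{X}_n(\tau,T)\in A$ is a condition purely on the normalised long-cycle lengths. From there the paper stays combinatorial--analytic: it relabels the friable/free decomposition $\sigma=\sigma(\e)\delta$ to pass to symmetric groups of lower order, evaluates the friable cofactor by Lemma \ref{Lem2} (Dickman's $\rho$), discards the $\tau$ with repeated long cycle lengths, and rewrites $F_n(U,T)$ as a Riemann sum over cycle-length tuples in the region $D_m(\e)$, which converges for \emph{every} fixed $(U,T)$ to an explicit sum of integrals $I(\e)$; a two-sided sandwich in $n$ and $\e$, driven by the uniform $O(\alpha^{-1})$ error, then yields existence of the limit. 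You replace all of this by soft probability: Poisson--Dirichlet convergence of the ranked normalised cycle lengths, the continuous mapping theorem applied to your functional $\Phi_t$, and a Cauchy-family argument in $\e$ resting on the same uniform bound. Your route is shorter and identifies the limit conceptually as an explicit functional of $PD(1)$; the paper's route buys the explicit integral representation $I(\e)$, which is exactly what lets the author match the limit with the process of \cite{EMNMT-97} (``the situation described on page 6'') and thereby import the moment formulas of Corollaries \ref{cor2}--\ref{cor3}, an identification your argument would still have to supply (it is available, since $PD(1)$ also governs the arithmetic setting by Billingsley's theorem, but it is an extra step). Two points deserve tightening. First, your continuous-mapping step gives convergence of $F_n^{(\e)}$ only at continuity points of the limit law $F^{(\e)}$, and that law is purely atomic (given $W$ and the family of admissible subsets, $\Phi_t$ ranges over a countable set of values); so the Cauchy argument must be run on a dense set avoiding the countably many jump hyperplanes of a sequence $F^{(\e_k)}$, and upgrading to convergence at \emph{all} $U$, as the proposition asserts, needs continuity of the final limit, which you do not establish --- the paper's Riemann-sum argument sidesteps this because $\partial D_m(\e)$ lies in finitely many hyperplanes of measure zero regardless of $U$. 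Second, your treatment of $t_1=0$ via $X_n(0)=(1+\vartheta)^{-w(\sigma)}\to0$ in probability is sound; the paper instead obtains a rate $\ll n^{-c(\vartheta)}$ by applying Lemma \ref{Lem1} to $1/f\in\mcM_c$. Neither issue is fatal; with the dense-set refinement spelled out, your proof stands as a valid, more probabilistic alternative.
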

   
   \begin{proof} Since $0<X_n(\sigma,t)\leq X_n(\sigma,1)=1$, $\sigma\in\S_n$ and $0\leq t\leq 1$,  without loss of generality, we can assume that $t_l<1$ and $x_1>0$. 
 To settle the case $t_1=0$, we check that, according to (\ref{conv}),  the function $1/f\in \mcM_c$ is defined by $f_j(1)=1+g_j(1)=1+\vartheta$  for every $j\in\N$. Using this, we   evaluate the difference
 \begin{align*}
 0\leq &\nu_n\big(X_n(t_2)\leq u_2,  \dots, X_n(t_l)\leq u_l\big)\\
 &\quad -
 \nu_n\big( X_n(0)\leq u_1, X_n(t_2)\leq u_2,  \dots, X_n(t_l)\leq u_l\big)\\
 &\leq
     \nu_n\big(X_n(0)>u_1\big)\leq \frac1{u_1} \E_n (1/f)\ll n^{-c(\vartheta)}.
     \end{align*}
  In the last step, we applied  Lemma \ref{Lem1}. Here $c(\vartheta)$ is a positive constant depending on $\vartheta$ only. 
Thus, for $u_1=0$, our task would reduce to the $(l-1)$-dimensional problem.
   
   Henceforth let $0<\e<\min\big\{ t_1^{1/c_0}, \,(1- t_l)^{1/c_0}\big\}$, where $c_0$ has been found in Proposition \ref{prop1}. Introduce the distribution functions
   \begin{align*}
        G_n(U, T)&:=\nu_n\big(\sigma:\; X_n(\sigma(\e),t_1)\leq u_1, \, \dots, X_n(\sigma(\e),t_l)\leq u_l\big)
        \\
        &=:\nu_n\big(\sigma:\; \overline{X}_n(\sigma(\e), T)\in A\big),
        \end{align*}
        where the process  $X_n(\sigma(\e), t)$ has been defined in Part 1, 
        \[ 
            \overline{X}_n(\sigma(\e), T):=
        \big(X_n(\sigma(\e),t_1), \dots, X_n(\sigma(\e),t_l)\big), \quad A:=\prod_{i\leq l}]0, u_i[ \subset 
        ]0, 1[^l.
        \]
     By Proposition \ref{prop1},  
          \begin{align*}
        F_n(U, T)&=   G_n(U, T)+O(\alpha^{-1})=\\
        &=
        \frac1{n!}\sum_{\sigma=\sigma(\e)\delta\in\S_n\atop \delta\, \text{is}\,  (\e n)-\text{friable} }
        {\mathbf 1}\big\{\overline{X}_n(\sigma(\e), T)\in A\big\}+O(\alpha^{-1}).
        \end{align*}
        
As in derivation of (\ref{free1}), we can apply reduction of labels (then $\binom{n}{k}$ of the $(\e n)$-free permutations $\sigma(\e)$ reduce to one $\tau\in\S_k$) and rewrite
          \begin{align*}
        F_n(U, T)&= 
        \sum_{\e n\leq k\leq n} \frac1{k!}\sum_{\tau\in\S_k} 
        {\mathbf 1}\big\{\overline{X}_n(\tau, T)\in A\big\}\\
        &\quad \times
        \nu_{n-k}\big(\delta\in\S_{n-k}:\; \delta\,\text{is} \, (\e n)-\text{friable}\big)+
        O(\alpha^{-1})\\
        &=
        \sum_{\e n\leq k\leq n} \frac1{k!}\sum_{\tau\in\S_k} 
        {\mathbf 1}\big\{\overline{X}_n(\tau, T)\in A\big\} \rho\Big(\frac{n-k}{\e n}\Big) + O(\alpha^{-1}),
        \end{align*}
        by Lemma \ref{Lem2} with $m=n-k\geq n(1-\e)$ and $r=\lfloor\e n\rfloor$, taking into account that $\rho(u)\ll u^{-u/2}$ for $u=(n-k)/(\e n)\geq \e^{-1}/2$. Similarly, here we can get rid of $\tau$ having  cycles with repeated lengths. For that, we can use the  estimate
        \begin{align*}
        &\sum_{\e n\leq k\leq n} \nu_k\big(\tau\in\S_k,\; \exists j\, \text{such\, that}\, k_j(\tau)\geq 2\big)\\
        &\leq
         \frac12 \sum_{\e n\leq k\leq n} \sum_{\e n\leq j\leq k}\E_k\Big(k_j(\tau)\big( k_j(\tau)-1\big)\Big)\\
         &=
         \frac12 \sum_{\e n\leq k\leq n} \sum_{\e n\leq j\leq k}\frac1{j^2}\ll \frac1{\e}.
         \end{align*}   
         Multiplied by $\rho(\e^{-1}/2)$, this quantity gives also the remainder $O(\alpha^{-1})$.
        So after  simplifications,  we arrive at
        \begin{align}
             &F_n(U, T)\nonumber\\
             &=
                \sum_{\e n\leq k\leq n}\rho\Big(\frac{n-k}{\e n}\Big) \nu_k\Big(\tau\in\S_k:\; k_j(\tau)\leq 1, \, \e n\leq j\leq k,\; \overline{X}_n(\tau, T)\in A\Big) 
                  + O(\alpha^{-1})\nonumber\\
        &=  
         \sum_{m\leq \e^{-1}}  \sum_{\e n\leq k\leq n} \rho\Big(\frac{n-k}{\e n}\Big) \nu_k\Big(\tau\in\S_k(m):\; \overline{X}_n(\tau, T)\in A\Big) 
                 + O(\alpha^{-1}).
      \label{FnUT}
      \end{align}
       Here $\S_k(m)$ denotes the subset of $(\e n)$-free permutations $\tau$ in $\S_k$ which have exactly~$m$, $1\leq m\leq \e^{-1}$, cycles of different lengths. We further intend to change the summation over $k$ by a summation over $m$-tuples of the cycle lengths appearing in all accounted~$\tau$.
      
      Let $\tau\in \S_k(m)$ and its cycle lengths  in the fixed order be  $(j_1,\dots, j_m)\in \rfloor\e n,n]^m$ provided that $j_1+\cdots+j_m=k$. 
   The number of such $\tau$ equals   $k!/ (j_1\cdots j_m)$.
          What is the meaning of the condition $\overline{X}_n(\tau, T)\in A$ in terms of the cycle lengths $ (j_1,\dots, j_m)$ for the given $\tau$?
          
     The following observations hold for any such $\tau$.  Firstly,  $f(\tau)=(1+\vartheta)^m$. Secondly, the divisors $\delta\vert \tau$ are enumerated by the vector of indicators $(i_1,\dots, i_m)$, where $i_r=1$ if the cycle of length $j_r$ appears in $\delta$. Hence
     $|\delta|=i_1j_1+\cdots+i_mj_m$. Thirdly, the vector $(j_1/n,\dots,j_m/n)$ necessarily belongs to the intersection, denoted by $D_m(\e)$, of the following two sets of the vectors $\bar x=(x_1,\dots, x_m)$:
          \[
     \big\{\bar x\in \rfloor\e,1]^m:\;  x_1+\cdots +x_m\leq 1\big\}  
      \]
    and
    \[
    \bigcap_{j\leq l}\bigg\{\bar x:\; \sum_{(i_1,\dots, i_m)\in\{0,1\}^m} \vartheta^{i_1+\dots + i_m} 
    {\mathbf 1}\big\{i_1x_1+\cdots + i_m x_m\leq t_j\big\}\leq u_j(1+\vartheta)^m\bigg\}.
    \]  
    
    Having all this in mind, we obtain from (\ref{FnUT})
    \begin{align}
    F_n(U, T)&=\sum_{m\leq \e^{-1}}  \sum_{ (j_1/n,\dots, j_m/n)\in D_m(\e)} \rho\bigg(\frac{1}{\e }
    \Big(1-\sum_{r\leq m}\frac{j_r}{n}\Big)\bigg) \frac1{j_1\cdots j_m} 
                 + O(\alpha^{-1})\nonumber\\
     &=
     \sum_{m\leq \e^{-1}} \int_{D_m(\e)} \rho\Big(\frac{1}{\e }
    \Big(1-\sum_{r\leq m}x_r\Big)\Big) \frac{\rd x_1\cdots \rd x_m}{x_1\cdots x_m} 
                 + O(\alpha^{-1}) +o_\e(1)           
                 \label{FnUT1}
      \end{align}
  as $n\to\infty$,      by the definition of the $m$-dimensional Riemann integral. We have arrived at the situation described on page 6 of the paper \cite{EMNMT-97}. Letting successively $n\to\infty$ and  $\e\to 0$, by virtue of the notation $\alpha=(\log(1/\e))^{-1/2}$, we verify that 
  \[
      \limsup_{\e\to0} I(\e)\leq  \liminf_{n\to\infty} F_n(U, T)\leq \limsup_{n\to\infty}F_n(U, T)\leq       \liminf_{\e\to0} I(\e),
       \]
       where $I(\e)$ denotes the sum of integrals in the above relation. This shows that, as claimed in Proposition \ref{prop2}, the limits 
              $\lim_{\e\to0}\, I(\e)=\lim_{n\to\infty}\, F_n(U, T)$
exist for all vectors $T$ and $U$. 
        \end{proof}
  
\subsection{Tightness}

This part is devoted to showing the tightness of the sequence of distributions $\{P_n\}_{n=1}^\infty$. 
    We take advantage of the  G. Tenenbaum's idea \cite{GT-97}  to verify a stronger tightness criterion than that used in the Skorokhod space $\D[0,1]$. According to Theorem 15.5 of \cite{Bil}, the following assertion  also assures that the  weak limit $P$ of a subsequence $\{P_{n'}\}$ as $n'\to\infty$ is supported by a subset of the space $\C[0,1]$.
    
    \begin{prop} \label{prop3} For every $0<a,\epsilon<1$,
     \[
                 \nu_n\Big( \sup_{|s-t|\leq a}\big|X_n(t)-X_n(s)\big|\geq \epsilon\Big)\ll\epsilon^{-1} a^{\vartheta/(1+\vartheta)}.  
  \]
  \end{prop}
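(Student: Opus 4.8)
The plan is to exploit the monotonicity of the trajectories, reducing the supremum over the continuum of pairs $(s,t)$ to a manageable maximum of increments whose tails can then be controlled by the number-theoretic concentration estimates behind Theorem~\ref{BM} and the Addendum~\cite{GT-97}. Since $g\geq 0$, each map $t\mapsto X_n(\sigma,t)$ is non-decreasing, so the modulus of continuity equals the largest forward increment,
\[
\sup_{|s-t|\leq a}\big|X_n(t)-X_n(s)\big|=\sup_{0\leq t\leq 1}\big(X_n(t)-X_n((t-a)^+)\big),
\]
and, covering $[0,1]$ by the mesh $t_k=ka$, $0\leq k\leq \lceil 1/a\rceil$, this is at most $\max_k\big(X_n(t_{k+1})-X_n(t_{k-1})\big)$. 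Thus it suffices, after a union bound, to estimate the probabilities $\nu_n\big(X_n(t_{k+1})-X_n(t_{k-1})\geq\epsilon\big)$ for the width-$2a$ windows and to check that their total is $\ll \epsilon^{-1}a^{\theta}$, where $\theta=\vartheta/(1+\vartheta)$.

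Next I would split the range of $t$ into a central block $[\eta^{c},1-\eta^{c}]$ and two boundary layers, where $\eta$ is an $(\eta n)$-friability cutoff to be chosen as a small function of $a$ and $c\in(0,c_0)$ with $c_0$ the constant of Proposition~\ref{prop1}. On the central block Proposition~\ref{prop1} lets me replace $X_n(\sigma,\cdot)$ by the long-cycle process $X_n(\sigma(\eta),\cdot)$ off an exceptional set of measure $\ll(\log(1/\eta))^{-1/2}$. The boundary layers are handled directly: the whole increment accumulated on $[0,\eta^{c}]$ is $X_n(\eta^{c})-X_n(0)$, whose mean is $O\big(B(\eta^{c};\theta,1-\theta)\big)=O(\eta^{c(1-\theta)})$ by Theorem~\ref{BM}, together with $\E_n X_n(0)\ll n^{-\theta}$ from Lemma~\ref{Lem1}; a symmetric bound, now of order $\eta^{c\theta}$, controls $[1-\eta^{c},1]$. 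Choosing $\eta$ small enough in terms of $a$ renders both the approximation error and the boundary contributions $\ll\epsilon^{-1}a^{\theta}$, so that only the central increments of the long-cycle process remain.

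For those I would pass, exactly as in the proof of Proposition~\ref{prop2}, to the explicit model in which the surviving permutation $\tau$ is $(\eta n)$-free with $m\leq \eta^{-1}$ distinct long cycle lengths $j_1,\dots,j_m$; then $f(\tau)=(1+\vartheta)^m$, the divisors are indexed by subsets $S\subset\{1,\dots,m\}$, and the increment over a window $(sn,tn]$ equals $(1+\vartheta)^{-m}\sum_{S}\vartheta^{|S|}\mathbf1\{sn<\sum_{r\in S}j_r\leq tn\}$. The required estimate thus becomes an anticoncentration statement: the total $\vartheta$-weighted mass of subset sums of $(j_1,\dots,j_m)$ falling into any window of width $2an$ should exceed $\epsilon\,(1+\vartheta)^m$ only with probability $\ll\epsilon^{-1}a^{\theta}$. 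This is precisely the situation reached on page~6 of \cite{EMNMT-97}, and I would obtain the window estimate by adapting Tenenbaum's argument from \cite{GT-97}, which supplies the concentration-function bound with the exponent $\theta$; summing the $O(1/a)$ central windows then produces the factor $a^{\theta}$.

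The main obstacle is this last, anticoncentration step. A crude first-moment (Markov) bound is insufficient: the total variation of $X_n$ is $1$, so summing mean increments over the grid returns only the trivial $O(\epsilon^{-1})$, with no gain in $a$; and near the left endpoint the mean increment is as large as $a^{1-\theta}$, which for $\vartheta>1$ already exceeds the target $a^{\theta}$. Hence one must show that such large increments are genuinely rare rather than merely of small mean, i.e. control the \emph{tail} of the weighted subset-sum concentration uniformly in the window position and in the random configuration $(j_1,\dots,j_m)$ (whose number $m$ grows as $\eta\to0$). Securing this uniform tail bound with the sharp exponent $\theta$, and matching the boundary-layer losses to it through the choice of $\eta=\eta(a)$, is where the delicate estimates of \cite{GT-97}, and behind them the Dickman--de Bruijn and Buchstab asymptotics of Lemmas~\ref{Lem2}--\ref{Lem3}, enter decisively.
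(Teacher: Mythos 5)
Your proposal has a genuine gap at exactly the point you flag as the ``main obstacle'': the uniform anticoncentration bound for the $\vartheta$-weighted subset sums of the long cycle lengths is never proved, only deferred to ``adapting Tenenbaum's argument'' from \cite{GT-97}. That step is not a technical afterthought --- it \emph{is} the proof, and it does not require any of the scaffolding you erect around it. Worse, your scaffolding creates new problems: Proposition \ref{prop1} costs an error $O\big((\log(1/\eta))^{-1/2}\big)$, which decays so slowly that to make it $\ll \epsilon^{-1}a^{\theta}$ you must take $\eta$ super-exponentially small in $1/a$; this inflates the number of distinct long cycle lengths $m\leq \eta^{-1}$, so the anticoncentration estimate you still owe must be uniform over subset-sum configurations of wildly growing complexity, and the bound would in any case hold only for $n\geq n_0(\eta(a,\epsilon))$, which is weaker than the proposition's claim, uniform in $n$.

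Moreover, your diagnosis that ``a crude first-moment (Markov) bound is insufficient'' is correct only for your per-window union bound (where the increments telescope to $\E_n X_n(1)=1$), but it misleads you away from the paper's actual argument, which \emph{is} a first-moment bound --- applied to the supremum itself rather than window by window. By monotonicity, $Q_n(\sigma,a):=\sup_{0\leq t\leq 1-a}|X_n(t+a)-X_n(t)|$ is, for each fixed $\sigma$, the concentration function of the random variable taking value $|\delta|/n$ with probability $g(\delta)/f(\sigma)$, and the classical concentration-function inequality (Lemma 2.6.1 in Chapter III of \cite{GT-95}) gives the \emph{pathwise} bound
\[
Q_n(\sigma,a)\leq 3an\int_0^{1/(an)}\Big|\frac1{f(\sigma)}\sum_{\delta\vert\sigma} g(\delta)\,\re^{iv|\delta|}\Big|\,\rd v,
\]
valid uniformly over all window positions at once, with no grid, no friability cutoff, and no appeal to Proposition \ref{prop1}. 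The integrand $|G(\sigma,v)|$ is a completely multiplicative function with $G_j(1,v)=|1+\vartheta\re^{ijv}|/(1+\vartheta)$, so Lemma \ref{Lem1} together with the elementary inequality $|1+r\re^{ix}|/(1+r)\leq 1-r(1-\cos x)/(1+r)^2$ yields $\E_n|G(\sigma,v)|\ll \big(n|1-\re^{-1/n+iv}|\big)^{-\vartheta/(1+\vartheta)}$; integrating over $v$ gives $\E_n Q_n(\sigma,a)\ll a^{\vartheta/(1+\vartheta)}$, and Markov's inequality finishes. This handles the boundary layers automatically (no separate treatment of $t$ near $0$ or $1$ is needed, and your worry about the mean increment $a^{1-\theta}$ near the left endpoint dissolves, since the sup is bounded before expectations are taken). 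To repair your write-up you would have to carry out the deferred tail estimate in full; doing so along Tenenbaum's lines would reproduce precisely this characteristic-function computation, at which point the reduction to the $(\eta n)$-free model becomes superfluous.
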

       
    \begin{proof}    By virtue of the monotonicity of the process $X_n(t)$, $0\leq t\leq 1$, the modulus of continuity is
   \[
       \sup_{|s-t|\leq a}\big|X_n(t)-X_n(s)\big|= \sup_{ 0\leq t\leq 1-a}\big|X_n(t+a)-X_n(t)\big|=:Q_n(\sigma, a),
       \]
       where $0\leq s\leq t\leq 1$ and $0<a<1$.
   Thus, the estimate of Proposition \ref{prop3} will follow from
   \begin{equation}
\E_n Q_n(\sigma, a)\ll a^{\vartheta/(1+\vartheta)}.
        \label{Qn}
        \end{equation}
   
    For a given $\sigma\in\S_n$, the introduced $Q_n(\sigma, a)$ is just the  concentration function of the random variable taking  value $|\delta|/n$ with  probability $g(\delta)/f(\sigma)$  when $\delta\vert \sigma$.
   By Lemma~2.6.1 in Chapter III of the book \cite{GT-95},
   \[
         Q_n(\sigma,a)\leq 3an\int_0^{1/(an)} \big|G(\sigma, v)\big| \rd v,
         \]
   where
   \[
          G(\sigma, v)=\frac1{f(\sigma)}\sum_{\delta\vert \sigma} g(\delta) \re^{iv|\delta|}, \quad v\in\R.
          \]
   Consequently, 
   \begin{equation}
     \E_n Q_n(\sigma, a)\ll an \int_0^{1/(an)} \E_n\big|G(\sigma, v)\big| \rd v.
     \label{EQn}
     \end{equation}
  Since $|G(\sigma, v)|$  is a bounded completely multiplicative function on $\S_n$ defined by $G_j(1,v)=|1+\vartheta \re^{ijv}|/ (1+\vartheta)$, where $j\in\N$ and $v\in\R$ is a parameter,    
  Lemma \ref{Lem1} yields 
   \begin{align*}
   \E_n\big|G(\sigma, v)|&\ll \exp\bigg\{\sum_{j\leq n}\Big(\frac{|1+\vartheta\re^{i v j}|}{1+\vartheta}-1\Big)
   \frac1j\bigg\}  \\
   &\leq
   \exp\bigg\{-\frac{\vartheta}{1+\vartheta}\sum_{j\leq n}\frac{1-\cos v j}{j}\bigg\}. 
   \end{align*}
  Here we have applied the inequality
   \[
        |1+r\re^{ix}|/(1+r)\leq 1-r(1-\cos x)/(1+r)^2, \quad 0\leq r\leq 1,\, x\in\R,
        \]
        presented on  page 17 of the paper \cite{GT-97} with $r=\vartheta$ and $x= vj$. Using the equality 
                \[
        \sum_{j\leq n}\frac{1-\cos v j}{j}=\log\frac{|1-\re^{-1/n+iv}|}{ 1-\re^{-1/n}} +O(1),
        \]
 we obtain
 \[  
   \E_n\big|G(\sigma, v)|\ll \Big(\frac{ 1-\re^{-1/n}}{|1-\re^{-1/n+iv}|}\Big)^{\vartheta/(1+\vartheta)}
   \leq \Big(\frac{ 1}{n|1-\re^{-1/n+iv}|}\Big)^{\vartheta/(1+\vartheta)}
   \]
with an absolute constant in the symbol $\ll$. Returning to (\ref{EQn}), we proceed
as follows:\begin{align*}
     \E_n Q_n(\sigma, a)&
     \ll a n^{1/(1+\vartheta)}\int_0^{1/(an)} \frac{\rd v}{|1-\re^{-1/n+iv}|^{\vartheta/(1+\vartheta)}}\\
     &\ll 
   a+ a n^{1/(1+\vartheta)}\int_{1/n}^{1/(an)} v^{-\vartheta/(1+\vartheta)} \rd v\ll a^{\vartheta/(1+\vartheta)}.
  \end{align*}
  This is the estimate (\ref{Qn}). 
  \end{proof}
  
   {\bf Concluding remark.} Doing more cumbersome work, one can extend Theorem \ref{Thm1} in two ways. Firstly, the function $g$ can be taken from $\mcM$, not necessarily from $\mcM_c$. It can be proved that the values $g_j(k)$ for $k\geq 2$ and $j\leq n$ are negligible. Adopting Tenenbaum's  \cite{GT-97} argument, one can replace the condition on $g_j(1)=\vartheta>0$ so that their weighted sums over $\e n\leq j\leq n$  behave in the needed manner. An analytic technique in the style of Bareikis and Ma\v{c}iulis \cite{GBAM-RamJ15} works here as well.
   Secondly, as seen from  Theorem 3 of the paper \cite{GBEM-24}, analogous results could be established for permutations drawn according to the Ewens probability measure on $\S_n$.






\vskip 0.5 true in
Affiliation: Institute of Mathematics, Vilnius University, Naugarduko str. 24, LT-03225 Vilnius, Lithuania;
 email: eugenijus.manstavicius@mif.vu.lt


\begin{thebibliography}{99}
\bibliographystyle{APT}
\footnotesize

 \bibitem{ABT} {\sc Arratia, R., Barbour, A. and Tavar\'e, S.} (2003), {\em Logarithmic
        Combinatorial Structures: A Probabilistic Approach}. EMS
        Monographs in Mathematics, EMS Publishing House, Z\"urich.

\bibitem{GJBEM-San99} {\sc Babu, G.J. and Manstavi\v{c}ius, E.} (1999). Brownian motion for random permutations, {\em Sankhya, Ser. A}. {\bf 61,} 312--327.


\bibitem{GJBEM-AnISM02} {\sc Babu, G.J. and Manstavi\v{c}ius, E.} (2002). Limit processes with independent increments for the Ewens sampling formula. {\em Ann. Inst. Statist. Math.}
{\bf 54,} 607--620.

\bibitem{GJBEMVZ-07}  {\sc Babu, G.J., Manstavi\v{c}ius, E. and Zacharovas, V.} (2007). Limiting processes with dependent increments for measures on the symmetric group of permutations, {\em Advanced Studies in Pure Mathematics. {\bf49}. Proc. Int. Conf. “Probability and Number Theory”, Kanazawa, 2005}. Math. Soc. Japan, Tokyo,  41–--67.

 \bibitem{GBAM-AA12} {\sc Bareikis, G. and Ma\v{c}iulis, A.} (1992). 
 Ces\'{a}ro means related to the square of the divisor function. {\em Acta Arith.} {\bf156.1,} 83--99. 
    
\bibitem{GBAM-RamJ15} {\sc Bareikis, G. and Ma\v{c}iulis, A.} (2015).    On the second moment of an arithmetical process related to the natural divisors. {\em Ramanujan J.}. {\bf 37,} 1--24. 

 \bibitem{GBEM-24} {\sc Bareikis, G. and Manstavi\v{c}ius, E.} (2024). Construction of the beta distributions using the random  permutation divisors, {\em Nonlinear Analysis: Modelling and Control}. {\bf29,} 189--204.

\bibitem{Bil} {\sc Billingsley, P.} (1968).  {\em Convergence of Probability Measures}. Wiley \& Sons, New York.

\bibitem{BreGT-AAP16} {\sc De la Bret\'eche, R. and Tenenbaum, G.} (2016). Sur les processus arithm\'etiques li\'es aux diviseurs.  {\em Adv. Appl. Probab.} {\bf4}(A), 63--76.
  
  \bibitem{DeLP-85} {\sc DeLaurentis, J. M. and Pittel, B. G.} (1985). Random permutations and the Brownian motion, {\em Pacific J. Math.}. {\bf 119,} 287--301.

  \bibitem{FlSed} {\sc Flajolet, Ph. and Sedgewick, R.} (2008). { \em Analytic
Combinatorics}. Cambridge University Press, Cambridge.

\bibitem{Ford} {\sc Ford, K.} (2022).  Cycle type of random permutations: A toolkit, {\em Discrete Analysis}. {\bf 9,} 36 pp.

\bibitem{OGor-23} {\sc  Gorodetsky, O.} (2023). Uniform estimates for smooth polynomials over finite fields, {\em Discrete Analysis}. {\bf 16}, 32 pp.

\bibitem{JKEM-AnBud18} {\sc Klimavi\v{c}ius, J.  and Manstavi\v cius, E.} (2018). The Tur\'an--Kubilius inequality on permutations, {\em Annales Univ. Sci. Budapest., Sect. Comp.}, {\bf48,} 45--51.



\bibitem{EM-LMR02} {\sc Manstavi\v{c}ius, E.} (2002). On permutations missing short cycles. {\em Lietuvos matem. rink.} (Special issue). {\bf 42,} 1--6.
 
\bibitem{EM-MoHe17} {\sc Manstavi\v{c}ius, E.} (2017).  On mean values of multiplicative functions on the
symmetric group. {\em Monatsh. Math.} {\bf 182,} 359--376.

\bibitem{EMRP-EJC16} {\sc Manstavi\v{c}ius, E. and Petuchovas, R.} (2016). Local probabilities for random permutations without long cycles. {\em Electronic J. Comb.}  {\bf23,} \#P1.58, 25 pp.

  \bibitem{EMNMT-97} {\sc Manstavi\v{c}ius, E. and Timofeev, N.M.} (1997). Functional limit theorem related to natural divisors.  {\em Acta Math. Sci. Hungaricae}.  {\bf 75,} 1--13.
      
  \bibitem{RP-Aus18} {\sc Petuchovas, R.} (2018). Asymptotic estimates for the number of permutations without short cycles. {\em Australasian J. of Comb.} {\bf 72,} 1--18.
  
  \bibitem{GT-AIF79} {\sc Tenenbaum, G.} (1979). Lois de r\'epartition des diviseurs, 4.  {\em Ann. Inst. Fourier}. {\bf 29,} 1--15.
  
   \bibitem{GT-95} {\sc Tenenbaum, G.} (1995). {\em Introduction to Analytic and Probabilistic Theory of Numbers}. Cambridge Studies in Mathematics: 46. Cambridge University Press, 1995.
     
  \bibitem{GT-97} {\sc Tenenbaum, G.} (1997). Addendum to the paper of E.~Manstavi\v{c}ius and N.M.~Timofeev "Functional limit theorem related to natural divisors". {\em Acta Math. Sci. Hungaricae}.  {\bf 75,} 15--22.
  
\end{thebibliography}
\end{document}